\newcommand{\mc}{\mathcal}
\newcommand{\bsigma}{\mathbf{\mathop{\pmb{\sum}}}}
\newcommand{\abs}[1]{\left| #1 \right|}
\newcommand{\PPI}{\mathbf{PPI}}
\newcommand{\noopsort}[1]{}
\newtheorem{theorem}{Theorem}[section]
\newsavebox{\Prfref}
\newsavebox{\prfref}
\newtheoremstyle{ref}
{\topsep}	
{\topsep}	
{\it}
{}
{}
{}
{ }
{\thmname{{\bfseries#1}}\thmnumber{ \textbf{#2\thmnote{\rm #3}\textbf .}}}
\theoremstyle{ref}
\newtheorem{thm}[theorem]{Theorem}
\newtheorem{prop}[theorem]{Proposition}
\newtheorem{cor}[theorem]{Corollary}
\theoremstyle{nnref}
\newtheorem*{defn}{Definition}
\begin{document}
\title{PFA$(S)[S]$ for the Masses}
\author{Franklin D. Tall{$^1$}}

\footnotetext[1]{Research supported by NSERC grant A-7354.\vspace*{2pt}}
\date{\today}
\maketitle

\begin{abstract}
We present S. Todorcevic's method of forcing with a coherent Souslin tree over restricted iteration axioms as a black box usable by those who wish to avoid its complexities but still access its power.
\end{abstract}

\renewcommand{\thefootnote}{}
\footnote
{\parbox[1.8em]{\linewidth}{$2000$ Math.\ Subj.\ Class.\ Primary 54A35, 54D15, 54D20, 54D45, 03E35, 03E57; Secondary 54D55, 03E50, 03E55.}\vspace*{5pt}}
\renewcommand{\thefootnote}{}
\footnote
{\parbox[1.8em]{\linewidth}{Key words and phrases: PFA$(S)[S]$, Martin's Axiom, Martin's Maximum, P-ideal Dichotomy,
forcing with a coherent Souslin tree, locally compact normal, Axiom R.}}

\section{Introduction}
This paper is dedicated to Professor Y. Kodama, who hosted my very first lecture in Japan more than 35 years ago, which was the start of many fruitful interchanges with Japanese topologists and set theorists.

This note is an expanded version of a talk presented at the recent \emph{First Pan-Pacific International Conference on Topology and Applications}.  I thank the organizers for inviting me and for a superb conference.  I thank Editor J. E. Vaughan of \emph{Topology and its Applications} for inviting me to write this up for that journal.

Todorcevic invented his method in 2001 \cite{To1} in order to investigate problems not decided by PFA or combinatorial principles such as $\diamondsuit$.  A collection of remarkable results has since been obtained using this method by Todorcevic, P. Larson, A. Dow, and the author.  The proofs are technically difficult, so just as \emph{Martin's Axiom} is accessible to those who do not understand iterated forcing, it would be nice to be able to apply this method without the difficult forcing.  At present, there is no one axiom that will accomplish this, but we can at least list the more important consequences so far of the method, so that they may be further applied.

A \emph{coherent Souslin tree} is a particular kind of very homogeneous Souslin tree.  Its exact definition will not concern us here. All we need know is that its existence follows from $\diamondsuit$ \cite{Larson}.

The notations MA$_{\omega_1}(S)$, PFA$(S)$, MM$(S)$ refer to the weaker versions of MA$_{\omega_1}$, PFA, and MM (Martin's Maximum \cite{FMS}) obtained by restricting only to those posets that preserve the coherent Souslin tree $S$ under countable chain condition, proper, and preserving-stationary-subsets-of-$\omega_1$ forcing, respectively.  Notations such as \emph{PFA$(S)[S]$ implies $\Phi$} are shorthand for ``in any model obtained by forcing with $S$ over a model of PFA$(S)$, $\Phi$ holds".

A heuristic analogy which may be helpful to the reader is to recall that two principal kinds of consequences of MA$_{\omega_1}$ are the ``combinatorial" ones following from MA$_{\omega_1}$($\sigma$-centred), and the ``Souslin-type" ones \cite{KT}.  Souslin-type ones are those that imply Souslin's Hypothesis, such as \emph{there are no compact $S$-spaces}, \emph{there are no first countable $L$-spaces}, \emph{all Aronszajn trees are special}, etc.  In \cite{KT} we showed that the failure of Souslin's Hypothesis was consistent with MA$_{\omega_1}$($\sigma$-centred); in models of form e.g. PFA$(S)[S]$ one obtains most of the Souslin-type consequences of PFA, but many of the combinatorial ones fail, indeed $\mathfrak{p} = \aleph_1$.

From the point of view of a topologist, the name of the game is to use consequences of MA$_{\omega_1}$, PFA, or MM proven -- usually with more difficult proofs -- from MA$_{\omega_1}(S)[S]$, PFA$(S)[S]$, or MM$(S)[S]$, and combine them with consequences failing under MA$_{\omega_1}$, PFA, MM, but holding in models of PFA$(S)[S]$, etc.  More particularly, consequences such as normality implying collectionwise Hausdorffness for certain spaces -- consequences of $V = L$ -- have been shown to hold in some of these models.  It is not yet clear what other kinds of useful consequences of $V = L$ might hold in these models.  The collectionwise Hausdorff ones have been particularly fruitful; they are actually of more than topological applicability, since they can be translated into uniformization of ladder systems or freeness of Whitehead groups -- see Larson-Tall \cite{LT1}.

\section{Some Consequences}
To hold the reader's attention, let us mention some important results obtained by this method.  No other method is known to prove the consistency of the conclusions.

\begin{enumerate}
\item{
\emph{MA$_{\omega_1}(S)[S]$ implies that if $X$ is a compact space with $X^2$ hereditarily normal, then $X$ is metrizable}.  Kat\v{e}tov had proved this in ZFC for $X^3$ hereditarily normal and naturally asked about $X^2$.  50 years later, P. Larson and Todorcevic solved the problem \cite{LTo}.  Consistent counterexamples had earlier been constructed by G. Gruenhage and P. Nyikos \cite{GN}.
}
\item{
\emph{There is a model of MA$_{\omega_1}(S)[S]$ in which locally compact, perfectly normal spaces are paracompact}.  There are many consistent counterexamples, e.g. MA$_{\omega_1}$ implies the Cantor tree over a $Q$-set is a counterexam\-ple; $\diamondsuit$ implies Ostaszewski's space is a counterexample, etc.  The problem of whether it was consistent there are no counterexamples was raised by S. Watson \cite{W}, \cite{W1}.  Larson and Tall \cite{LT1} constructed the required model of PFA$(S)[S]$; A. Dow and Tall managed to drop the large cardinal \cite{DT2}.
}
\item{
\emph{\textup{MA}$_{\omega_1}(S)[S]$ implies every hereditarily normal manifold of dimension $> 1$ is metrizable}.  The problem of the existence of such a model was raised by Nyikos in 1983 \cite{N} and solved by Dow and Tall 30 years later in \cite{DT1}.  Although they don't bother to get MA$_{\omega_1}(S)[S]$, their model could be tweaked to be of that form, with additional consequences of PFA thrown in.
}
\item{
\emph{MM$(S)[S]$ implies locally compact, hereditarily normal spaces are hereditarily paracompact if and only if they do not include a copy of $\omega_1$}.  This was proved by Dow and Tall \cite{DT3}, building on Larson-Tall \cite{LT2}.  Again, the Cantor tree on a $Q$-set and Ostaszewski's space \cite{O} are consistent counterexamples.
}

\item[] Lest the reader think that this method is only of interest for locally compact, hereditarily normal spaces, let us mention:

\item[5.]\textit{
MM$(S)[S]$ implies locally compact, normal, countably tight spaces are paracompact if and only if they do not include a copy of $\omega_1$ and their countable subsets have Lindel\"of closures \cite{DT3}.
}
\item[6.]\textit{
PFA$(S)[S]$ implies countably compact, perfectly normal spaces are compact \cite{DT2}.
}
\end{enumerate}

Of course this last result is less interesting, since it was already known to follow from MA$_{\omega_1}$.  However, the fact that this conclusion may be obtained in a context in which $\mathfrak{p} = \aleph_1$ is of interest. The large cardinal is not needed. 

To avoid having a consequence of MA$_{\omega_1}$, we can combine (2) and (6) to obtain
\begin{itemize}
	\item[6$^\prime$.]\textit{ There is a model of MA$_{\omega_1}(S)[S]$ in which every locally countably compact, perfectly normal space is paracompact.}
\end{itemize}

Again, Ostaszewski's space is a consistent counterexample.

\section{Two models}
Topologists with no knowledge of forcing may wish to skim this section.  Our plan is to present two models of set theory.  The first will be of ``form MA$_{\omega_1}(S)[S]$", i.e. a particular model of MA$_{\omega_1}(S)$ will be forced over by $S$.  The aim of this model is to get as many consequences of PFA as possible, without invoking large cardinals.  The second model will be a model of MM$(S)[S]$, which -- just as for PFA$(S)[S]$ -- is constructed using a supercompact cardinal, but in which certain reflection principles hold which do not follow from PFA$(S)[S]$ \cite{Tp}, \cite{DT3}.  Typically, from MA$_{\omega_1}(S)[S]$, PFA$(S)[S]$, or MM$(S)[S]$ we may only deduce $\aleph_1$-collectionwise Hausdorffness for the kinds of spaces (first countable normal or locally compact normal) we are interested in.  Fortunately, some preliminary forcing before forcing MA$_{\omega_1}(S)[S]$, PFA$(S)[S]$, or MM$(S)[S]$ will enable us to obtain full collectionwise Hausdorffness.  If, for some reason, one would like to avoid that preliminary forcing, one could instead use a reflection principle (\textbf{Axiom R}) following from MM$(S)[S]$ to get results such as

\begin{quotation}
\emph{MM$(S)[S]$ implies locally separable normal first countable spaces are collectionwise Hausdorff.}
\end{quotation}

Since we have no use for this here, we shall not consider it further.

In the MA$_{\omega_1}(S)[S]$ case, the preliminary forcing consists of an Easton extension adjoining $\kappa^+$ Cohen subsets of each regular uncountable cardinal $\kappa$.  This produces $\diamondsuit$ and hence a coherent Souslin tree, and that, after any further $\aleph_2$-chain condition forcing of size $\leq \aleph_2$, normal spaces of character $\leq \aleph_1$ that are $\aleph_1$-collectionwise Hausdorff are collectionwise Hausdorff \cite{Tpp}.

In the PFA$(S)[S]$ or MM$(S)[S]$ cases, before doing the Easton extension, one should first force to make the supercompact cardinal indestructible under countably closed forcing \cite{L}.

In the case where we want to avoid large cardinals, we assume GCH in the ground model and, after our preliminary forcing, iterate $\aleph_2$-p.i.c. posets in a countable support iteration of length $\omega_2$ before forcing with $S$.  By a standard L\"owenheim-Skolem argument, we can get MA$_{\omega_1}(S)[S]$ this way.  We should mention that countable support proper iterations of $S$-preserving proper posets are proper and preserve $S$ \cite{M1}.  Similarly, ``nice" semi-proper iterations of $S$-preserving posets are semi-proper and $S$-preserving \cite{M}.

In the large cardinal avoidance model, we also want to obtain a variety of other consequences of PFA.  This is ad hoc: one must in each case check that a countable support iteration of length $\omega_2$ of $\aleph_2$-p.i.c. posets will establish the desired consequence after forcing with $S$, using some form of $\diamondsuit$ on $\omega_2$ for a L\"owenheim-Skolem argument, and interleaving the iterations for each of the consequences one is interested in.  This is done in detail for two consequences of PFA in \cite{DT1}, and asserted for others in \cite{To}.

\section{Two black boxes}
In this section we present a list of useful propositions which can jointly be obtained without large cardinals in a particular model of MA$_{\omega_1}(S)[S]$ or in a particular model of MM$(S)[S]$ mentioned in the previous section.   We then augment the list with several propositions obtainable from MM$(S)[S]$.  In a number of cases, proofs are not yet available in print or even in preprint form.  We shall note the status of such propositions so that researchers can employ suitable caution.

To make the list less bulky, we shall employ a number of abbreviations detailed below.

\vspace{.2cm}

\noindent
\begin{minipage}[t]{1cm}
	\textbf{CW}
\end{minipage}
\begin{minipage}[t]{12.5cm}
	{First countable normal spaces are collectionwise Hausdorff.}
\end{minipage}

\vspace{.2cm}
\noindent
\begin{minipage}[t]{.9cm}
	\textbf{HL}
\end{minipage}
\begin{minipage}[t]{12.6cm}
	First countable hereditarily Lindel\"of spaces are hereditarily separable.
\end{minipage}

\vspace{.2cm}
\noindent
\begin{minipage}[t]{1.3cm}
	\textbf{M-M}
\end{minipage}
\begin{minipage}[t]{12.2cm}
	Compact countably tight spaces are sequential.
\end{minipage}

\vspace{.2cm}
\noindent
\begin{minipage}[t]{1.5cm}
	\textbf{M-M}${}^{-}$
\end{minipage}
\begin{minipage}[t]{11cm}
	Compact countably tight spaces are sequentially compact.
\end{minipage}

\vspace{.2cm}
\noindent
\begin{minipage}[t]{.7cm}
	$\bsigma$
\end{minipage}
\begin{minipage}[t]{12.8cm}
	Let $X$ be a compact countably tight space.  Let $Y \subseteq X$, $\abs{Y} = \aleph_1$.  Suppose $\{W_\alpha\}_{\alpha < \omega_1}$, $\{N_\alpha\}_{\alpha < \omega_1}$ are open subsets of $X$ such that:
	\begin{enumerate}
	\item{
	$\overline{W}_{\alpha} \subseteq V_\alpha$,
	}
	\item{
	$\abs{V_\alpha \cap Y} \leq \aleph_0$,
	}
	\item{
	$Y \subseteq \bigcup\{W_\alpha : \alpha < \omega_1\}$.
	}
	\end{enumerate}
	Then $Y$ is $\sigma$-closed discrete in $\bigcup\{W_\alpha : \alpha < \omega_1\}$.
\end{minipage}

\vspace{.2cm}
\noindent
\begin{minipage}[t]{1cm}
	$\bsigma^-$
\end{minipage}
\begin{minipage}[t]{12.5cm}
	In a compact, countably tight space, locally countable subspaces of size $\leq \aleph_1$ are $\sigma$-discrete.
\end{minipage}

\vspace{.2cm}
\noindent
\begin{minipage}[t]{3.4cm}
	$\bsigma^-$\textbf{(sequential)}
\end{minipage}
\begin{minipage}[t]{10.1cm}
	In a compact sequential space, locally countable subspaces of size $\leq \aleph_1$ are $\sigma$-discrete.
\end{minipage}

\vspace{.2cm}
\noindent
\begin{minipage}[t]{.9cm}
	$\bsigma^*$
\end{minipage}
\begin{minipage}[t]{12.6cm}
	In a locally compact space of Lindel\"of number $\leq \aleph_1$ not including a perfect pre-image of $\omega_1$, locally countable subspaces of size $\aleph_1$ are $\sigma$-closed discrete.
\end{minipage}

\vspace{.2cm}
\noindent
\begin{minipage}[t]{1.3cm}
	$\PPI^+$
\end{minipage}
\begin{minipage}[t]{12.2cm}
	Every sequentially compact non-compact space (of character $\leq \aleph_1$) includes an uncountable free sequence (a copy of $\omega_1$).
\end{minipage}

\vspace{.2cm}
\noindent
\begin{minipage}[t]{1.3cm}
	$\PPI'$
\end{minipage}
\begin{minipage}[t]{12.2cm}
	Every countably compact, non-compact, first countable space includes a copy of $\omega_1$.
\end{minipage}

\vspace{.2cm}
\noindent
\begin{minipage}[t]{1.2cm}
	$\PPI$
\end{minipage}
\begin{minipage}[t]{12.3cm}
	Every first countable perfect pre-image of $\omega_1$ includes a copy of $\omega_1$.
\end{minipage}

\vspace{.2cm}
\noindent
\begin{minipage}[t]{1.9cm}
	\textbf{CTPPI}
\end{minipage}
\begin{minipage}[t]{11.6cm}
	Every countably tight perfect pre-image of $\omega_1$ includes a copy of $\omega_1$.
\end{minipage}

\vspace{.2cm}

Recall a collection of countable subsets of an uncountable set $X$ is a \emph{$P$-ideal} if
\begin{enumerate}
\item{
$J \subseteq I \in \mc{I}$ implies $J \in \mc{I}$,
}
\item{
Whenever $I_n$, $n < \omega$, $\in \mc{I}$, there is a $J \in \mc{I}$ such that for all $n$, $I_n - J$ is finite.
}
\end{enumerate}

\noindent
\begin{minipage}[t]{2cm}
	\textbf{PID($\aleph_1$)}
\end{minipage}
\begin{minipage}[t]{11.5cm}
	Let $\abs{X} = \aleph_1$, let $\mc{I}$ be a $P$-ideal on $X$.  Then either:
	\begin{enumerate}
	\item{
	there is an uncountable $A \subseteq X$ such that for every countable $B \subseteq A$, $B \in \mc{I}$,
	}
	\item[]{or}
	\item{
	$X = \bigcup_{n < \omega}B_n$ such that for every $n < \omega$ and every $I \in \mc{I}$, $B_n \cap I$ is finite.
	}
	\end{enumerate}
\end{minipage}

\vspace{.2cm}
\noindent
\begin{minipage}[t]{1cm}
	$\mathbf{P}_{22}$
\end{minipage}
\begin{minipage}[t]{12.5cm}
	For every stationary $S \subseteq \omega_1$, for every $P$-ideal $\mc{I}$ on $S$, either:
	\begin{enumerate}
	\item{there is a stationary $A \subseteq S$ such that for every countable $B \subseteq A$, $B \in \mc{I}$,
	}
	\item[]{or}
	\item{
	there is a stationary $A \subseteq S$ such that for every countable $B \subseteq A$ and every $I \in \mc{I}$, $B \cap I$ is finite.
	}
	\end{enumerate}
\end{minipage}

\vspace{.2cm}
\noindent
\begin{minipage}[t]{2.1cm}
\textbf{PID$(\aleph_1)^+$}\\
\\
\end{minipage}
\begin{minipage}[t]{11.4cm}
	For every $\aleph_1$-generated ideal $\mathcal{I}$ of subsets of $\omega_1$, either there is a closed unbounded $C\subseteq \omega_1$ such that $[C]^{\aleph_0}\subseteq\mathcal{I}^\perp$, or there is a stationary $D\subseteq{\omega_1}$ such that $[D]^{\aleph_0}\subseteq\mathcal{I}^{\perp\perp}$.
\end{minipage}

\vspace{.5cm}
Recall an ideal $\mathcal{I}$ is \emph{$\kappa$-generated} if there is a $\mathcal{G}\subseteq\mathcal{I}$ of size $\kappa$ such that each member of $\mathcal{I}$ is included in some member of $\mathcal{G}$. For an ideal $\mathcal{I}\subseteq[\omega_1]^{\aleph_0}$, $\mathcal{I}^\perp=\{A\in[\omega_1]^{\aleph_0}:\text{ for all }I\in\mathcal{I},A\cap I\text{ is finite}\}$.

\vspace{.5cm}
\noindent
\begin{minipage}[t]{1.1cm}
\textbf{SAT}
\end{minipage}
\begin{minipage}[t]{12.4cm}
Every Aronszajn tree is special.
\end{minipage}
\vspace{.2cm}

\noindent
\begin{minipage}[t]{1.4cm}
\textbf{OGA}
\end{minipage}
\begin{minipage}[t]{12.3cm}
The Open Graph Axiom (also called the ``Open Coloring Axiom").
If $X\subseteq\mathbb{R}$ and $[X]^2=K_0\cup K_1$ is a partition and $K_0$ is open in $[X]^2$, then either there is an uncountable $Y\subseteq X$ such that $[Y]^2\subseteq K_0$, or else $X=\bigcup_{n<\omega}Y_n$, where each $[Y_n]^2\subseteq K_1$.
\end{minipage}
\vspace{0.5cm}

A procedure for a ``front-loaded" forcing that will establish \textbf{CW} in models of the sort we are considering is given in \cite{LT1}.  A simplified version for the case when no large cardinals are present is given in \cite{DT2}.  \textbf{HL} is proved in a model of form MA$_{\omega_1}(S)[S]$ in \cite{LTo}.  Since countable chain condition partial orders are proper and indeed those of size $\aleph_1$ satisfy the $\aleph_2$-p.i.c., \textbf{HL} will also hold in the other models of MA$_{\omega_1}(S)[S]$ we consider.  

\textbf{M-M} was proved from PFA$(S)[S]$ by Todorcevic in \cite{To}.  Apparently there is a fixable gap in the proof.  \textbf{M-M${}^{-}$} follows.  A stand-alone proof is in \cite{D}.

There are a number of other results related to \textbf{M-M} proved from PFA$(S)[S]$ in \cite{To}. Todorcevic asserts that none of these require large cardinals. We list some of them, using his numbering:
\begin{itemize}
	\item[8.5] Every countably tight compact space has a point of countable character.
	
	\item[8.6] Every non-Lindel\"of subspace of a countably tight compact space has an uncountable discrete subspace.
	
	\item[10.3] If a compact space includes a nonseparable subspace with no uncountable discrete subspace, then it maps onto $[0,1]^{\aleph_1}$.
	
	\item[10.2] If $Y$ is a nonseparable subspace of a regular space $X$, then either $Y$ includes an uncountable discrete subspace or a subset $Z$ such that the closure of $Z$ in $X$ has no point of countable $\pi$-character.
	
	\item[10.6] If $X$ is a ccc $T_5$ compact space, then $X$ is hereditarily Lindel\"of and hereditarily separable. 
\end{itemize}

$\bsigma$ and hence $\bsigma^-$, and $\bsigma^*$ were proved from PFA$(S)[S]$ in \cite{FTT}, assuming \textbf{M-M}.  Without that assumption the proof yields $\bsigma$ for compact sequential spaces, and hence $\bsigma^-$\textbf{(sequential)}.  

$\PPI^+$, and hence $\PPI'$ and $\PPI$ were proven from PFA$(S)[S]$ in \cite{DT1}.  It was also shown how to modify the proof so as to avoid large cardinals.  Those modifications enable the construction of a model in which these propositions hold as well as \textbf{HL}, $\bsigma^-$\textbf{(sequential)}, and $\mathbf{P}_{22}$.  That \textbf{M-M} and \textbf{PID($\aleph_1$)} hold in such models is asserted in \cite{To}.  \textbf{PID$(\aleph_1)$} can be obtained by a proof similar to that for $\mathbf{P}_{22}$ -- see \cite{DT1}; a correct proof for \textbf{M-M} can be modified as in \cite{D2} and \cite{DT1} for such models.

\textbf{CTPPI} was obtained by Alan Dow via a more complicated version of the proof for $\PPI^+$.  This appears in \cite{DT2}.

\textbf{PID$(\aleph_1)^+$} was incorrectly stated as 6.4 in \cite{To}, where it was asserted it could be obtained without large cardinals.  Dow has verified that for the version stated here.

\textbf{OGA} was obtained from PFA$(S)[S]$ in \cite{To}.

\begin{thm}
	There is a model of {\normalfont MA$_{\omega_1}(S)[S]$} in which all of the consequences listed above in this section hold.
\end{thm}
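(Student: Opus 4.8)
The plan is to carry out the construction sketched in Section 3. Start from a ground model of GCH, perform the Easton preliminary forcing adjoining $\kappa^+$ Cohen subsets of each regular uncountable cardinal $\kappa$ --- this produces $\diamondsuit$, hence a coherent Souslin tree $S$, and guarantees that after any subsequent $\aleph_2$-chain condition forcing of size $\leq\aleph_2$ every normal space of character $\leq\aleph_1$ that is $\aleph_1$-collectionwise Hausdorff is collectionwise Hausdorff \cite{Tpp} --- then run a countable support iteration $\langle P_\alpha,\dot Q_\alpha:\alpha<\omega_2\rangle$ of $\aleph_2$-p.i.c.\ posets, each forced to preserve $S$, and finally force with $S$. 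With the iterands chosen as below one must check that $P_{\omega_2}$ is $\aleph_2$-p.i.c.\ and $S$-preserving, that $P_{\omega_2}\ast S$ destroys every listed counterexample, and --- the real content --- that the iterands needed for the various consequences can be interleaved without one undoing another.

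First I would fix the bookkeeping. Using a $\diamondsuit$-type guessing principle on $\omega_2$ (available by GCH in the Easton extension) one arranges a single $\omega_2$-length bookkeeping that, along a club of stages, simultaneously anticipates: on the one hand, every $S$-preserving $\aleph_2$-p.i.c.\ poset of size $\leq\aleph_1$ together with one of its dense sets --- this yields MA$_{\omega_1}(S)$, and since $\sigma$-centred and more generally ccc posets of size $\aleph_1$ are $\aleph_2$-p.i.c.\ it also delivers \textbf{HL}, \textbf{SAT}, \textbf{OGA} (and the combinatorial consequences of MA$_{\omega_1}$) after the final forcing with $S$; on the other hand, for each ``structural'' consequence --- \textbf{M-M} and hence \textbf{M-M}${}^{-}$; $\bsigma$, $\bsigma^-$, $\bsigma^-$\textbf{(sequential)}, $\bsigma^*$; $\PPI^+$, $\PPI'$, $\PPI$; \textbf{CTPPI}; \textbf{PID($\aleph_1$)}; $\mathbf{P}_{22}$; \textbf{PID$(\aleph_1)^+$}; and Todorcevic's 8.5, 8.6, 10.2, 10.3, 10.6 --- the particular trees, compact spaces, free sequences, and ideals that the known proof of that consequence from PFA$(S)[S]$ must catch, so that the ad hoc $S$-preserving $\aleph_2$-p.i.c.\ poset killing the relevant counterexample is inserted cofinally often. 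For \textbf{CW} one instead ``front-loads'' the forcing as in \cite{LT1}, \cite{DT2}: I would perform that initial forcing first, check it is $\aleph_2$-p.i.c.\ and $S$-preserving, and then splice in the interleaved tail.

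The engine legitimizing the interleaving is the preservation theorem that countable support iterations of $S$-preserving proper posets are proper and preserve $S$ \cite{M1}, together with preservation of the $\aleph_2$-p.i.c.\ under countable support iterations of length $\leq\omega_2$ of $\aleph_2$-p.i.c.\ posets. At each stage the iterand --- whether of MA type or one of the structural posets --- is verified individually to be $S$-preserving and $\aleph_2$-p.i.c., and these two theorems hand us that $P_{\omega_2}$ has both properties; forcing with $S$ afterwards is the usual last step and preserves $\omega_1$ since a Souslin tree is ccc. A L\"owenheim--Skolem / reflection argument at stages of uncountable cofinality then shows that every relevant counterexample in $V[P_{\omega_2}][S]$ is already coded, by a name of hereditary size $\leq\aleph_1$, at some stage $\alpha<\omega_2$ caught by the bookkeeping, where the corresponding iterand destroyed it; this gives MA$_{\omega_1}(S)[S]$ and each listed conclusion. (For $\bsigma$, $\bsigma^-$, $\bsigma^*$ one uses that \textbf{M-M} holds in the final model, since \cite{FTT} derives those full statements from PFA$(S)[S]$ plus \textbf{M-M}.)

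The main obstacle is not any single iterand but the global compatibility of the interleaving: one must check, consequence by consequence, that (a) the argument producing it from PFA$(S)[S]$ only ever requires iterands that are $\aleph_2$-p.i.c., $S$-preserving, and of size $\leq\aleph_1$ (or at worst $\leq\aleph_2$), so that they are legitimate links of one iteration and a single shared $\diamondsuit_{\omega_2}$ bookkeeping suffices; and (b) the objects it must anticipate are sufficiently absolute between intermediate models that destroying a counterexample to one consequence never resurrects a counterexample to another. This is exactly the ad hoc verification alluded to in Section 3: it is carried out in detail in \cite{DT1} for $\PPI^+$ (and the accompanying \textbf{HL}, $\bsigma^-$\textbf{(sequential)}, $\mathbf{P}_{22}$) and for \textbf{CW}, and asserted in \cite{To} for \textbf{M-M}, \textbf{PID($\aleph_1$)}, \textbf{OGA}, \textbf{SAT}, and items 8.5--10.6; the fixable gap in the \textbf{M-M} argument is handled as in \cite{D}, \cite{D2}, \textbf{CTPPI} follows Dow's argument in \cite{DT2}, and \textbf{PID$(\aleph_1)^+$} has been verified by Dow. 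Since several of these component verifications exist only in preprint or unpublished form, the honest course is to assemble the cited arguments --- flagging their status --- rather than to reprove them, the point of the theorem being precisely that they can all be arranged in one iteration.
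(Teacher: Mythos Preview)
Your proposal is correct and follows exactly the approach the paper intends; indeed it is far more detailed than the paper's own proof, which consists of the single sentence ``An amalgamation of \cite{DT1}, \cite{DT2}, \cite{LTo} and \cite{To}.'' One small redundancy: you describe the Easton extension at the outset and then separately speak of ``front-loading'' for \textbf{CW} as if it were an additional step --- in the paper's scheme the Easton extension \emph{is} the front-loading that secures the passage from $\aleph_1$-CWH to full CWH, with the $\aleph_1$-CWH itself coming from the final forcing with $S$.
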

\begin{proof}
	An amalgamation of \cite{DT1}, \cite{DT2}, \cite{LTo} and \cite{To}.
\end{proof}

We should also mention that 

\begin{thm}[ \cite{M},\cite{W}]
	MA$_{\omega_1}(S)[S]$ implies $2^{\aleph_0} = 2^{\aleph_1} = \aleph_2=\mathfrak{b}=\mathfrak{q}$.
\end{thm}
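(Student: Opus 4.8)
**Plan for proving Theorem (MA$_{\omega_1}(S)[S]$ implies $2^{\aleph_0} = 2^{\aleph_1} = \aleph_2 = \mathfrak{b} = \mathfrak{q}$).**

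The plan is to establish the chain of (in)equalities by exhibiting, for each cardinal characteristic, a specific $\sigma$-centred or ccc poset that both witnesses the relevant combinatorial statement and visibly preserves a coherent Souslin tree $S$, so that MA$_{\omega_1}(S)$ applied in the ground model $V$ gives the conclusion, and one then checks the statement is not destroyed by the further ccc (indeed Souslin-tree) forcing with $S$. First I would handle $2^{\aleph_0} = \aleph_2$: the lower bound $2^{\aleph_0} \geq \aleph_2$ follows because MA$_{\omega_1}(S)$ includes MA$_{\omega_1}(\sigma\text{-centred})$ — adding a Cohen real, or more precisely the standard almost-disjoint coding posets, are $\sigma$-centred and trivially $S$-preserving (a $\sigma$-centred poset forces $S$ to remain Souslin, since it adds no uncountable antichain to $S$), hence $\mathfrak{p} > \aleph_1$ is \emph{not} what we want here, but $\mathfrak{m}(\sigma\text{-centred}) > \aleph_1$ does give $2^{\aleph_0} > \aleph_1$. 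For the upper bound $2^{\aleph_0} \leq \aleph_2$ one uses a counting/chain-condition argument on the iteration: the model is obtained as a countable-support iteration of $\aleph_2$-p.i.c. posets of length $\omega_2$ over a GCH ground model, followed by forcing with $S$; the $\aleph_2$-chain condition (from p.i.c.) plus GCH bounds the number of nice names, and forcing with $S$ (size $\aleph_1$, ccc) adds no new reals beyond $\aleph_2$-many. So $2^{\aleph_0} = \aleph_2$ in the final model.

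Next I would treat $2^{\aleph_1} = \aleph_2$: again $\leq$ is the nice-names count using the $\aleph_2$-cc and GCH in the ground model (countable support iterations of proper posets of length $\omega_2$ collapse nothing and, under the p.i.c., have the $\aleph_2$-cc, so subsets of $\omega_1$ have $\leq \aleph_2$-many names; forcing with $S$ adds a new subset of $\omega_1$, namely a branch, but does not blow up $2^{\aleph_1}$ past $\aleph_2$ since $|S| = \aleph_1$ and $S$ is ccc); and $\geq$ is immediate from $2^{\aleph_1} \geq 2^{\aleph_0} = \aleph_2$. For $\mathfrak{b} = \aleph_2$: the poset for adding a dominating real (finite conditions $\langle s, F\rangle$ with $s$ a finite partial function and $F$ a finite set of functions to be dominated) is $\sigma$-centred, hence $S$-preserving, so MA$_{\omega_1}(S)$ gives that no family of size $\aleph_1$ is unbounded, i.e. $\mathfrak{b} \geq \aleph_2$ in $V$; this survives the ccc forcing with $S$ because $S$ adds no dominating real over $V$ (it is $\omega^\omega$-bounding — in fact Souslin trees add no reals of order type $\omega$ that escape the ground model modulo a simple argument), so an unbounded family of size $\aleph_1$ in $V[S]$ would restrict to one in $V$; combined with $\mathfrak{b} \leq 2^{\aleph_0} = \aleph_2$ we get $\mathfrak{b} = \aleph_2$. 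Finally $\mathfrak{q} = \aleph_2$ (here $\mathfrak{q}$ is the least size of a set of reals that is not a $Q$-set, equivalently the least $\kappa$ for which MA$_\kappa(\sigma\text{-centred})$-style relative-separation fails): the relevant forcing making a given set of reals of size $\aleph_1$ into a $Q$-set is $\sigma$-centred (it is an almost-disjoint-sets forcing coding arbitrary subsets), hence $S$-preserving, so MA$_{\omega_1}(S)$ gives $\mathfrak{q} \geq \aleph_2$; and $\mathfrak{q} \leq 2^{\aleph_0} = \aleph_2$ always.

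The main obstacle is the preservation bookkeeping through the final $S$-forcing step: one must verify that each combinatorial statement established in $V \models$ MA$_{\omega_1}(S)$ is not destroyed by forcing with the Souslin tree $S$ itself. For $2^{\aleph_0}$ and $2^{\aleph_1}$ this is a clean cardinal-arithmetic computation using $|S| = \aleph_1$ and the ccc of $S$. For $\mathfrak{b}$ the key point is that a coherent Souslin tree is $\omega^\omega$-bounding (adds no unbounded real), so dominating families and unbounded families are computed correctly between $V$ and $V[S]$ up to reindexing by countably many ground-model functions; this is where I would need to be slightly careful, invoking that $S$ is proper and adds no new $\omega$-sequences of ordinals that are not (essentially) in $V$ — true for Souslin trees since forcing with a Souslin tree is ccc and adds no reals of the relevant kind is actually \emph{false} in general (Souslin forcing can add reals), so the correct statement to use is that $S$ is $\omega^\omega$-bounding, which holds for Souslin trees because any new function $\omega \to \omega$ is bounded by a ground-model one along a dense set of conditions. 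I would cite \cite{M} and \cite{W} for the precise preservation lemmas, as the theorem attribution indicates these computations are carried out there; the proof here is the assembly of these pieces, with the routine verifications that the standard $\sigma$-centred posets for dominating reals and $Q$-sets are $S$-preserving (automatic, since $\sigma$-centred forcing preserves Souslinity) being the only genuinely new checks.
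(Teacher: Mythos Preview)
The paper supplies no proof of this theorem: it is stated with bare citations to \cite{M} and \cite{W} and nothing more, so there is no argument in the paper to compare yours against. Your outline is far more detailed than anything the paper offers.

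That said, your attempt has a structural gap. By the paper's own convention (stated in the introduction), ``MA$_{\omega_1}(S)[S]$ implies $\Phi$'' means $\Phi$ holds in \emph{every} model obtained by forcing with $S$ over \emph{any} model of MA$_{\omega_1}(S)$. Your upper bounds $2^{\aleph_0}\le\aleph_2$ and $2^{\aleph_1}\le\aleph_2$ rely entirely on name-counting in the specific $\aleph_2$-p.i.c.\ countable-support iteration of length $\omega_2$ over a GCH ground model --- i.e., on features of one particular construction, not on the axiom. Either the cited sources derive the continuum bound directly from MA$_{\omega_1}(S)$ together with the existence of the coherent tree (an argument you have not given), or the paper is using the phrase loosely to mean the canonical model; you should check \cite{M} to see which, rather than assume the iteration bookkeeping suffices.

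Two smaller points. First, your back-and-forth about whether Souslin forcing adds reals is unnecessary: forcing with a Souslin tree adds \emph{no} new reals whatsoever, since every maximal antichain is countable and hence bounded in some countable level, so any name for an $\omega$-sequence is decided at the supremum of countably many such levels. This cleanly gives preservation of $\mathfrak{b}$. Second, this same fact undercuts your $\mathfrak{q}$ argument if $\mathfrak{q}$ denotes the least size of a non-$Q$-set: since $S$ adds no reals, the relative $G_\delta$ subsets of any $X\subseteq\mathbb{R}$ are unchanged, yet $S$ \emph{does} add new subsets of $\omega_1$ and hence new subsets of an $\aleph_1$-sized $X$, none of which can be relative $G_\delta$'s. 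So $Q$-sets of size $\aleph_1$ are destroyed, not preserved, by forcing with $S$. You should verify what $\mathfrak{q}$ actually denotes in \cite{W} before committing to this line.
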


With the assumption of the existence of a supercompact cardinal, stronger results may be obtained. Rather than using {\normalfont PFA}$(S)[S]$ as in previous papers, we shall move directly to {\normalfont MM}$(S)[S]$ for maximal power. We introduce more abbreviations:
\begin{itemize}
	\item[\textbf{PID}]{
		PID$(\aleph_1)$ without the cardinality restriction.
		}
\end{itemize}

\noindent
\begin{minipage}[t]{1.2cm}
$\textbf{PID}^+$
\end{minipage}
\begin{minipage}[t]{12cm}
	PID$(\aleph_1)^+$ without the cardinality restriction.
\end{minipage}

\begin{thm}
	\textup{MM}$(S)[S]$ implies \textup{\textbf{PID}} and \textup{\textbf{PID$^+$}}.
\end{thm}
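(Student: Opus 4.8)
The plan is to derive \textbf{PID} and \textbf{PID$^+$} from MM$(S)[S]$ by exhibiting, for each $P$-ideal $\mc{I}$ on an uncountable set $X$ (respectively each $\aleph_1$-generated ideal on $\omega_1$), a single $S$-preserving poset that forces stationary-set-preservingly the relevant dichotomy, and then invoking MM$(S)[S]$. The template is the classical Abraham--Todorcevic / Todorcevic construction of the forcing $\mathbb{P}_{\mc{I}}$ used to prove the $P$-ideal dichotomy from PFA: conditions are finite $\subseteq$-increasing sequences of members of $\mc{I}$ (or finite promised approximations), ordered by end-extension with a side condition built from countable elementary submodels. One first observes that if $\mathbb{P}_{\mc{I}}$ is proper, then either it is ``trivial'' on a large set --- in which case alternative (2) of \textbf{PID} holds, i.e. $X$ splits into countably many pieces each meeting every $I \in \mc{I}$ finitely --- or else a generic filter produces an uncountable $A \subseteq X$ all of whose countable subsets lie in $\mc{I}$, which is alternative (1). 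So the entire content is showing that (under the failure of alternative (2)) the poset $\mathbb{P}_{\mc{I}}$ is proper \emph{and preserves $S$ and stationary subsets of $\omega_1$}, and that $\aleph_1$-many dense sets suffice to get an uncountable $A$.

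First I would recall from Todorcevic's original argument that $\mathbb{P}_{\mc{I}}$ is proper whenever alternative (2) fails; the properness proof is an amalgamation argument over a countable elementary submodel $N$, using the $P$-ideal property (condition (2) in the definition of $P$-ideal on the excerpt) to diagonalize the countably many requirements coming from $N$ into a single member of $\mc{I}$ that can be added on top of any condition in $N$. Next, and this is the genuinely new point for the $(S)[S]$ setting, I would check that this poset is $S$-preserving: since $\mathbb{P}_{\mc{I}}$ is proper, by the preservation theorem cited in the excerpt (\cite{M1}: countable support proper iterations of $S$-preserving proper posets are $S$-preserving) it suffices to verify $S$-preservation for the single step, which follows because the amalgamation argument above relativizes to countable elementary submodels that also know $S$ --- forcing with $\mathbb{P}_{\mc{I}}$ adds no new uncountable antichains to $S$ because any such antichain would be captured by some such $N$ and the generic branch structure of $S$ is untouched by the end-extension ordering. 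Then one verifies stationary-preservation by the analogous argument relative to stationary sets, so that $\mathbb{P}_{\mc{I}}$ is a legitimate poset for MM$(S)$, and MM$(S)[S]$ --- i.e. the statement holding after forcing with $S$ over a model of MM$(S)$ --- yields the dichotomy in the extension. For \textbf{PID$^+$} on $\omega_1$ the argument is parallel, using the $\aleph_1$-generating family in place of the $P$-ideal closure property, with the two horns of the dichotomy now ``club-many countable sets in $\mc{I}^\perp$'' versus ``stationary-many in $\mc{I}^{\perp\perp}$''; stationary-preservation of the poset is exactly what lets MM$(S)$ rather than just PFA$(S)$ deliver the club, not merely stationary, alternative.

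The main obstacle I expect is the $S$-preservation verification for the side-condition poset: one must show that the finite-conditions-with-models forcing $\mathbb{P}_{\mc{I}}$ does not generically specialize or otherwise destroy the coherent Souslin tree $S$, and the subtle case is when the generic object (the uncountable set $A$, or its complementary decomposition) could interact with a branch of $S$. The way around this is to run the amalgamation lemma with models of the form $N \prec H(\theta)$ that contain $S$, $\mc{I}$, \emph{and} a name for a hypothetical new antichain of $S$, and to use the homogeneity of the coherent Souslin tree together with the fact that end-extension conditions impose only finite constraints --- so any real threat to $S$ would already be realized in the ground model, contradicting that $S$ is Souslin there. A secondary technical point is confirming that $\aleph_1$-many dense sets genuinely suffice to produce an \emph{uncountable} $A$ (not merely an unbounded approximation), which is the standard counting: for each $\alpha$ the set of conditions deciding whether some element past $\alpha$ enters $A$ is dense, so meeting these $\aleph_1$ dense sets forces $|A| = \aleph_1$; and dually for the decomposition in alternative (2), which requires no genericity at all once properness is known. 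Finally one should note the large cardinal is used only to get a model of MM$(S)$ in the first place (via a supercompact, as in the excerpt's Section 3), not in any individual application of the dichotomy.
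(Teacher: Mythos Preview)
Your sketch misses the central difficulty of the $(S)[S]$ method, and this is a genuine gap rather than a detail. The statement ``MM$(S)[S]$ implies \textbf{PID}'' means: whenever $G$ is $S$-generic over a model $V$ of MM$(S)$, every $P$-ideal $\mc{I}$ in $V[G]$ satisfies the dichotomy. The $P$-ideal $\mc{I}$ therefore lives in the $S$-extension, where no forcing axiom is available; all you have in $V[G]$ is an $S$-\emph{name} $\dot{\mc{I}} \in V$. Your plan builds the Abraham--Todorcevic poset $\mathbb{P}_{\mc{I}}$ from $\mc{I}$ itself and then ``invokes MM$(S)[S]$'', but MM$(S)[S]$ is not an axiom one invokes --- MM$(S)$ is, and it is only available in $V$, where $\mc{I}$ need not exist. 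The hard and genuinely new content, which is exactly what the paper calls ``technically difficult'', is to manufacture in $V$ an $S$-preserving proper poset $\mathbb{Q}$ out of the name $\dot{\mc{I}}$ and the coherent tree $S$ so that meeting $\aleph_1$ dense subsets of $\mathbb{Q}$ in $V$ and \emph{then} forcing with $S$ produces the witness for the dichotomy in $V[G]$. The coherence of $S$ is used essentially in this construction; nothing in your sketch touches it.

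A second, independent gap: your $S$-preservation argument does not work. Properness by itself does not give $S$-preservation --- the standard forcings that specialize Aronszajn trees are proper and kill every Souslin tree --- and the iteration theorem you cite from \cite{M1} only says that iterations of $S$-preserving proper posets remain $S$-preserving; it cannot be used to establish $S$-preservation of a single step. Your ``any new antichain would be captured by some $N$'' line is exactly the kind of argument that fails for specializing forcings. Finally, the paper itself notes that PFA$(S)[S]$ already suffices for both \textbf{PID} and \textbf{PID}$^+$, so your remark that stationary-set preservation (hence MM rather than PFA) is what delivers the club alternative in \textbf{PID}$^+$ is not correct. For comparison: the paper gives no argument here at all, simply recording that PFA$(S)[S]$ suffices and pointing to \cite{To} and unpublished work of Dow for the restricted versions, whose proofs carry over.
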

\noindent
PFA$(S)[S]$ will do. Similar remarks apply as for the restricted versions mentioned earlier.

\begin{defn}
	$\Gamma\subseteq[X]^{<\kappa}$ is \textbf{tight} if whenever $\{C_\alpha:\alpha<\delta\}$ is an increasing sequence from $\Gamma$ and $\omega<\text{\normalfont cf}(\delta)<\kappa$, $\bigcup\{C_\alpha:\alpha<\beta\}\in\Gamma$.
\end{defn}

\noindent
\begin{minipage}[t]{2cm}
\textbf{Axiom R}
\end{minipage}
\begin{minipage}[t]{11.5cm}
If $\Sigma\subseteq[X]^{<\omega_1}$ is stationary and $\Gamma\subseteq[X]^{<\omega_2}$ is tight and unbounded, then there is a $Y\in\Gamma$ such that $\mathcal{P}(Y)\cap\Sigma$ is stationary in $[Y] ^{<\omega_1}$.
\end{minipage}

\vspace{.5cm}

\textbf{Axiom R} was introduced by W. Fleissner in \cite{F}. An important consequence is:

\begin{prop}[ {\cite{B}}]
	\normalfont{\textbf{ Axiom R}} implies that if $X$ is locally Lindel\"of, regular, and countably tight, then if it is not paracompact, it has a clopen non-paracompact subspace, provided closures of Lindel\"of subspaces are Lindel\"of.
\end{prop}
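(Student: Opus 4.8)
If one only wants \emph{a} clopen non--paracompact subspace, the hypotheses of regularity and local Lindel\"ofness suffice: put $x\sim y$ iff some Lindel\"of subspace contains both; since a union of two Lindel\"of subspaces is Lindel\"of this is an equivalence relation, and each class $Z=\bigcup\{L\subseteq X:L\text{ Lindel\"of},\ x\in L\}$ is clopen (if $y\in Z$, regularity inside a Lindel\"of neighbourhood of $y$ gives an open $W\ni y$ with $\overline W$ Lindel\"of; $\overline W$ meets $Z$, hence $\overline W\subseteq Z$ and $W\subseteq Z$; closedness is the same computation at a point of $\overline Z$). So $X$ is the topological sum of the $Z$'s, and since topological sums of paracompact spaces are paracompact, some $Z$ is not paracompact. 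The content of \textbf{Axiom R}, and the reason countable tightness and ``closures of Lindel\"of subspaces are Lindel\"of'' are hypothesized, is to get such a clopen witness of Lindel\"of number $\le\aleph_1$ -- the form in which $\bsigma^{*}$ and the other results above can be applied to it -- so that is what I would prove.

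I would first prove, in ZFC from the two standing hypotheses, a clopen--hull lemma: every $S\subseteq X$ with Lindel\"of number $\le\aleph_1$ lies in a clopen subspace of Lindel\"of number $\le\aleph_1$. Writing $S=\bigcup_{\alpha<\omega_1}L_\alpha$ with the $L_\alpha$ increasing and Lindel\"of, build an increasing continuous tower $\langle F_\alpha:\alpha<\omega_1\rangle$ of Lindel\"of closed sets with $L_\alpha\subseteq F_\alpha\subseteq\mathrm{int}(F_{\alpha+1})$ -- at successors cover the Lindel\"of set $F_\alpha\cup L_{\alpha+1}$ by countably many open sets with Lindel\"of closure and take the closure of their union, at limits the closure of the union of the earlier $F_\beta$ -- and note that $B=\bigcup_{\alpha<\omega_1}F_\alpha$ is open, has Lindel\"of number $\le\aleph_1$, and is closed, since by countable tightness a countable subset of $B$, hence its closure, already lies in some $F_\alpha$. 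Now fix a large regular $\theta$ and an open cover $\mathcal W$ of $X$ by sets with Lindel\"of closure admitting no locally finite open refinement. Let $\Gamma$ be the family of traces $M\cap X$ for elementary submodels $M\prec(H(\theta),\in,X,\mathcal W,\dots)$ with $\omega_1\subseteq M$, $|M|=\aleph_1$, and $M$ the union of an increasing continuous $\omega_1$--chain of countable elementary submodels; using the hull lemma and elementarity one checks that each such $M\cap X$ is a clopen subspace of Lindel\"of number $\le\aleph_1$, that $\Gamma\subseteq[X]^{<\omega_2}$, and that $\Gamma$ is tight and unbounded. Let $\Sigma\subseteq[X]^{<\omega_1}$ consist of the traces $N\cap X$ of countable elementary submodels $N\prec(H(\theta),\in,X,\mathcal W,\dots)$ that are ``$\mathcal W$--bad'', i.e.\ that witness, in the way such traces do, an obstruction to $\mathcal W$ having a locally finite open refinement. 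A routine closing--off argument, in which countable tightness lets club--many traces detect $\mathcal W$--badness, shows $\Sigma$ is stationary precisely because $X$ is not paracompact. Applying \textbf{Axiom R} (with the point set of $X$ as ambient set) produces $Y\in\Gamma$ with $\mathcal P(Y)\cap\Sigma$ stationary in $[Y]^{<\omega_1}$.

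The remaining step, which I expect to be the real obstacle, is to deduce that this $Y$ -- a clopen subspace of $X$ of Lindel\"of number $\le\aleph_1$ -- is not paracompact. Were it paracompact, a locally finite open refinement of $\mathcal W\restriction Y$ would, by countable tightness and elementarity, be ``seen locally'' by club--many countable $s\in[Y]^{<\omega_1}$, none of which could then be $\mathcal W$--bad, contradicting the stationarity of $\mathcal P(Y)\cap\Sigma$. Turning this into a proof requires isolating the right notion of $\mathcal W$--bad countable set, so that it is genuinely reflected by countable substructures; establishing the two--way principle that a clopen, locally Lindel\"of, regular, countably tight subspace with Lindel\"of closures of Lindel\"of subspaces is paracompact iff only non--stationarily many of its countable subsets are $\mathcal W$--bad; and checking that $\mathcal W$--badness of a countable subset of the clopen $Y$ is computed the same in $Y$ as in $X$. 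Everything else -- the chain--component reduction, the clopen--hull lemma, and the verification that $\Gamma$ consists of clopen $\le\aleph_1$--Lindel\"of subspaces and is tight and unbounded -- is bookkeeping, and the two standing hypotheses do their work precisely here and in those verifications: countable tightness makes non--paracompactness a countably reflecting property and makes traces and clopen hulls small and closed, while the Lindel\"of--closure hypothesis prevents hulls from escaping to larger Lindel\"of number.
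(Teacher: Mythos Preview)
The paper does not supply a proof of this proposition; it is simply cited from \cite{B} (Balogh), so there is nothing to compare your argument against line by line. That said, two remarks are in order.

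First, your opening observation is well taken and worth making explicit: as literally stated, the conclusion ``$X$ has a clopen non-paracompact subspace'' does not require \textbf{Axiom R} at all (indeed $X$ itself is such a subspace, and even if one wants a smaller piece, the chain-component decomposition you describe works from regularity and local Lindel\"ofness alone). Balogh's actual theorem, and the way it is used downstream in this paper (e.g.\ in conjunction with $\bsigma^{*}$), delivers a clopen non-paracompact subspace of Lindel\"of number $\le\aleph_1$; that is the genuine content, and the statement here is a mild understatement of it. Your decision to aim for that stronger conclusion is correct.

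Second, your outline is essentially Balogh's own argument: the clopen-hull lemma, the elementary-submodel families $\Gamma$ and $\Sigma$, and the application of \textbf{Axiom R} to reflect non-paracompactness down to an $\aleph_1$-sized clopen piece. You are right that the substantive work is in the last paragraph --- isolating the right notion of ``$\mathcal W$-bad'' countable set so that (i) non-paracompactness of $X$ makes $\Sigma$ stationary, and (ii) paracompactness of the reflected $Y$ would make $\Sigma\cap\mathcal P(Y)$ nonstationary. Balogh handles this by working with countable elementary submodels $N$ for which $\overline{N\cap X}$ is not covered by finitely many members of $\mathcal W\cap N$ (or an equivalent formulation); countable tightness is exactly what makes this a local, hence reflectable, obstruction. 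Your sketch correctly identifies where each hypothesis is used, but as you acknowledge, does not yet pin down the definition or carry out the two-way verification; filling that in is the real exercise.
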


\noindent
\textbf{Axiom R} can be used with the following consequence of \textbf{PID} in order to prove many results about when locally compact normal spaces are paracompact \cite{T2}.

\vspace{.2cm}
\noindent
\begin{minipage}[t]{1.9cm}
	\textbf{ENT} \cite{EN}
\end{minipage}
\begin{minipage}[t]{11.6cm}
	If $X$ is locally compact, then either
			\begin{enumerate}
				\item $X$ is the union of countably many $\omega$-bounded subspaces, or
				\item $X$ has an uncountable closed discrete subspace, or
				\item $X$ has a separable subspace with non-Lindel\"of closure.
			\end{enumerate}
			
\end{minipage}
\newline
\newline

Recall a space is \textbf{$\omega$-bounded} if countable sets have compact closures. There are a number of useful variations of \textbf{ENT}; see \cite{EN} and \cite{T2}.

Another useful reflection principle that is a consequence of \textbf{Axiom R} and hence (as we shall see) holds under MM$(S)[S]$ is \textbf{FRP} \cite{FJSSU}.
\par\xdef\tpd{\the\prevdepth}

\begin{defn}
	The \textbf{non-stationary ideal on $\omega_1$} is the $\sigma$-ideal \textbf{{\textup{NS}}$_{\omega_1}$} of non-stationary subsets of $\omega_1$.
\end{defn}

\noindent
\begin{minipage}{1.6cm}
\textbf{NSSAT}\\
\\
\end{minipage}
\begin{minipage}{11.9cm}
	is the assertion that NS$_{\omega_1}$ is \textbf{saturated}, i.\! e. there does not exist a family $\{A_\alpha:\alpha<\omega_2\}$ of stationary subsets of $\omega_1$ such that $A_\alpha\cap A_\beta$ is non-stationary, for every $\alpha\neq\beta\in\omega_2$.
\end{minipage}

\prevdepth\tpd
\vspace{.5cm}

\noindent
\begin{minipage}[t]{1cm}
\textbf{SCC}\\
\\
\end{minipage}
\begin{minipage}[t]{12.5cm}
	Strong Chang Conjecture. Let $\lambda>2^{\aleph_2}$ be a regular cardinal. Let $H(\lambda)$ be the collection of hereditarily $<\lambda$ sets. Let $M^*$ be an expansion of $\langle H_\lambda,\in\rangle$. Let $N\prec M^*$ (i.\,e. $N$ is an elementary submodel of $M^*$) be countable. Then there is an $N'$ such that $N\prec N'\prec M^*$, $N'\cap \omega_1=N\cap\omega_1$, and $|N\cap\omega_2|=\aleph_1$.
\end{minipage}

\vspace{.5cm}

\noindent
\begin{minipage}[t]{1cm}
\textbf{SRP}\\
\\
\end{minipage}
\begin{minipage}[t]{12.5cm}
	Strong Reflection Principle \cite{T,FJ,Woo}. Suppose $\lambda\geq\aleph_2$ and $\mathfrak{Z}\subseteq\mathcal{P}_{\omega_1}(\lambda)$ and that for each stationary $T\subseteq\omega_1$,
		\begin{equation*}
			\{\sigma\in\mathfrak{Z}:\sigma\cap\omega_1\in T\}
		\end{equation*}
		is stationary in $\mathcal{P}_{\omega_1}(\lambda)$. Then for all $X\subseteq\lambda$ of cardinality $\aleph_1$, there exists $Y\subseteq\lambda$ such that:
		\begin{itemize}
			\item[(a)] $X\subseteq Y$ and $|Y|=\aleph_1$;
			\par\xdef\tpd{\the\prevdepth}
			\item[(b)] $\mathfrak{Z}\cap\mathcal{P}_{\omega_1}(Y)$ contains a set which is closed unbounded in $\mathcal{P}_{\omega_1}(Y)$.
		\end{itemize}
\end{minipage}

\prevdepth\tpd
\vspace{.5cm}

\begin{thm}[{ \cite{T,Woo}}]
	\textbf{\textup{SRP}} implies $2^{\aleph_0}=2^{\aleph_1}=\aleph_2$.
\end{thm}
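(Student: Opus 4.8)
The plan is to derive $2^{\aleph_0} = 2^{\aleph_1} = \aleph_2$ from \textbf{SRP} in two stages: first $2^{\aleph_0} = \aleph_2$, then leverage that together with \textbf{SRP} again (or a counting argument) to get $2^{\aleph_1} = \aleph_2$. The inequality $2^{\aleph_0} \geq \aleph_2$ is the heart of the matter; the reverse inequality $2^{\aleph_1} \leq \aleph_2$ will follow from a standard combinatorial consequence of reflection, namely that \textbf{SRP} implies every $\aleph_2$-sized family exhibiting a "pathology" on $[\lambda]^{\aleph_1}$ reflects to an $\aleph_1$-sized set, which is enough to bound the number of subsets of $\omega_1$.

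First I would show $2^{\aleph_0} \geq \aleph_2$, equivalently $\neg\mathrm{CH}$ plus the absence of an intermediate cardinal. The classical route (due to Todorcevic, and to Woodin via $\psi_{AC}$ / the reflection of stationary sets) is to observe that \textbf{SRP} implies the failure of weak club guessing on $\omega_1$, or more directly that \textbf{SRP} implies Chang-type reflection which is incompatible with CH: under CH one can build, by a $\diamondsuit$-like or just a transfinite-recursion argument on $[\lambda]^{\aleph_1}$, a family $\mathfrak{Z}$ whose trace on each stationary $T \subseteq \omega_1$ is stationary in $\mathcal{P}_{\omega_1}(\lambda)$ but which contains no club on any $Y$ of size $\aleph_1$ — contradicting clause (b). Concretely, I would take $\lambda = \omega_2$, enumerate $[\omega_2]^{\aleph_1}$ in order type $\omega_2$ under CH (this is where CH is used: $|[\omega_2]^{\aleph_1}| = \aleph_2$), and diagonalize to kill every potential club while keeping all stationary traces; \textbf{SRP} then fails, so CH must fail. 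Ruling out $\aleph_1 < 2^{\aleph_0} < \aleph_2$ uses the same mechanism relativized: if $2^{\aleph_0} = \aleph_1^+{}$ were some $\kappa$ with $\aleph_1 < \kappa$, one re-runs the construction with $\lambda = \kappa$ to contradict \textbf{SRP}, forcing $2^{\aleph_0} = \aleph_2$.

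Next, for $2^{\aleph_1} \leq \aleph_2$: having established $2^{\aleph_0} = \aleph_2$, I would use \textbf{SRP} to show that $[\lambda]^{\aleph_1}$ carries no "large" stationary-set-preserving obstruction for $\lambda \geq \aleph_2$, which via a standard argument (bounding the number of equivalence classes of a reflecting family) gives $\lambda^{\aleph_1} = \lambda$ for all $\lambda \geq \aleph_2$ of uncountable cofinality, and in particular $\aleph_2^{\aleph_1} = \aleph_2$, hence $2^{\aleph_1} \leq \aleph_2$. Combined with $2^{\aleph_1} \geq 2^{\aleph_0} = \aleph_2$, this yields $2^{\aleph_1} = \aleph_2$, and we are done.

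The main obstacle I anticipate is the diagonalization step that contradicts \textbf{SRP} under CH: one must simultaneously preserve the stationarity of the trace $\{\sigma \in \mathfrak{Z} : \sigma \cap \omega_1 \in T\}$ for \emph{every} stationary $T$ — there are $2^{\aleph_1}$ such $T$, so under CH only $\aleph_2$ of them, which is exactly why the enumeration of $[\omega_2]^{\aleph_1}$ in type $\omega_2$ lets the bookkeeping close — while at each of the $\aleph_2$ stages destroying one candidate club on one candidate $Y$. Getting the two recursions (building $\mathfrak{Z}$, and destroying clubs) to interleave without conflict is the delicate bit; everything else is either classical cardinal arithmetic or a direct appeal to the statement of \textbf{SRP}. (In practice one cites Todorcevic or Woodin for the packaged statement; I would present the CH-refutation in outline and the arithmetic in full.)
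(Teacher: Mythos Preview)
The paper supplies no proof of this theorem; it is stated with citations to Todorcevic and Woodin and then used as input elsewhere. So there is no ``paper's approach'' to compare your sketch against --- the only question is whether the sketch could stand on its own.

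As written it cannot, for two concrete reasons.

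First, the paragraph about ``ruling out $\aleph_1 < 2^{\aleph_0} < \aleph_2$'' is incoherent: there is no cardinal strictly between $\aleph_1$ and $\aleph_2$, and the clause ``if $2^{\aleph_0} = \aleph_1^{+}$ were some $\kappa$ with $\aleph_1 < \kappa$'' is a tautology ($\aleph_1^{+} = \aleph_2$). Presumably you meant to bound $2^{\aleph_0}$ \emph{above} by $\aleph_2$, but that is exactly what your second stage ($2^{\aleph_1} \leq \aleph_2$) already delivers via $2^{\aleph_0} \leq 2^{\aleph_1}$, so this entire passage should be deleted.

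Second, and more seriously, your CH-diagonalization requires enumerating $[\omega_2]^{\aleph_1}$ in order type $\omega_2$, and you justify this by ``CH implies $|[\omega_2]^{\aleph_1}| = \aleph_2$''. That is false: $|[\omega_2]^{\aleph_1}| = \aleph_2^{\aleph_1} = 2^{\aleph_1}$ under CH, and CH places no bound on $2^{\aleph_1}$. So the recursion cannot be set up from CH alone, and the argument as stated would at best refute GCH, not CH. The published proofs (Todorcevic's via consequences of bounded forcing axioms, Woodin's via the coding principle $\psi_{AC}$) do not proceed by a naive diagonalization of this sort; they exhibit a definable injection of $\mathcal{P}(\omega_1)$ into $\omega_2$ using the reflection hypothesis itself, which yields $2^{\aleph_1} \leq \aleph_2$ directly and handles $\neg\mathrm{CH}$ by a separate short argument. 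Your second-stage description (``bounding the number of equivalence classes of a reflecting family'') gestures in this direction but is too vague to count as a proof; you would need to name the coding explicitly or cite the relevant lemma.
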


\begin{thm}[{ \cite{T}}]
	\textbf{\textup{SRP}} implies \textbf{\textup{NSSAT}}.
\end{thm}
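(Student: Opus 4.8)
The plan is to argue by contradiction, using \textbf{SRP} to reflect a projective stationary set to an impossible configuration. Suppose $\mathrm{NS}_{\omega_1}$ is \emph{not} saturated; then there is a family of $\aleph_2$ stationary subsets of $\omega_1$ with pairwise non-stationary intersections. Extend it to a \emph{maximal} antichain $\mathcal A$ of $\mathcal P(\omega_1)/\mathrm{NS}_{\omega_1}$. Any maximal antichain has size at most $|\mathcal P(\omega_1)|=2^{\aleph_1}$, and by the preceding Theorem \textbf{SRP} gives $2^{\aleph_1}=\aleph_2$, so $\mathcal A=\{S_\alpha:\alpha<\omega_2\}$ has size exactly $\aleph_2$. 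Maximality says precisely that for every stationary $T\subseteq\omega_1$ there is $\alpha<\omega_2$ with $T\cap S_\alpha$ stationary; in particular $\bigcup_\alpha S_\alpha$ contains a club.

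Next, fix a large regular $\theta$, a well-order $<^*$ of $H(\theta)$, and identify $H(\theta)$ with $\lambda:=|H(\theta)|$ so that \textbf{SRP} applies to subsets of $\mathcal P_{\omega_1}(\lambda)$ verbatim. Let $\mathfrak Z$ be the set of countable $N\prec(H(\theta),\in,<^*,\mathcal A)$ for which $N\cap\omega_1\in S_{\alpha(N)}$, where $\alpha(N)$ denotes the $<^*$-least $\alpha\in N$ with $N\cap\omega_1\in S_\alpha$ (with $N\notin\mathfrak Z$ if no such $\alpha$ exists). Then $\mathfrak Z$ is projective stationary in the sense required by \textbf{SRP}: given a stationary $T\subseteq\omega_1$, use maximality of $\mathcal A$ to pick $\alpha^*$ with $T\cap S_{\alpha^*}$ stationary; a routine elementary-chain argument produces, inside any prescribed club of $\mathcal P_{\omega_1}(\lambda)$, a countable $N$ with $\alpha^*\in N$ and $N\cap\omega_1\in T\cap S_{\alpha^*}$, and such an $N$ lies in $\mathfrak Z$ with $N\cap\omega_1\in T$. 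Applying \textbf{SRP} with $X$ an $\aleph_1$-sized subset of $\lambda$ that codes $\omega_1$ and $\mathcal A$, we get $Y\supseteq X$ with $|Y|=\aleph_1$ and a $C\subseteq\mathcal P_{\omega_1}(Y)$, closed unbounded in $\mathcal P_{\omega_1}(Y)$, with $C\subseteq\mathfrak Z$. Because $|Y|=\aleph_1<\operatorname{cf}(\omega_2)$, the ordinals of $Y$ below $\omega_2$ are bounded, say by $\delta<\omega_2$, so $\alpha(N)<\delta$ for all $N\in C$; projecting $C$ down to $\omega_1$ then yields a club $D\subseteq\omega_1$ with $D\subseteq\bigcup_{\alpha<\delta}S_\alpha$.

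Extracting the contradiction is the crux, and the step I expect to be the real obstacle. The antichain member $S_\delta$ is stationary and, being distinct from every $S_\alpha$ with $\alpha<\delta$, meets each of them in a non-stationary set; yet $S_\delta\cap D$ is stationary and contained in $\bigcup_{\alpha<\delta}S_\alpha$. This alone is not absurd, since $\mathrm{NS}_{\omega_1}$ is only $\sigma$-complete, so a union of $\aleph_1$ non-stationary sets may well be stationary. To push through, one must refine the definition of $\mathfrak Z$ --- exploiting the transitive collapses of the models $N\in C$ together with the continuous $\omega_1$-chain of countable models supplied by the club $C$ --- so that the image of $\alpha(N)$ in the collapse of $N$ is forced below $N\cap\omega_1$; then a Fodor-type pressing-down argument \emph{on $\omega_1$ itself} pins the witness to a single $\alpha^*<\delta$ on a stationary set, making $S_{\alpha^*}\cap S_\delta$ stationary --- the desired contradiction. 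Locating the correct refinement is where the strength of \textbf{SRP} beyond ordinary stationary reflection is spent; the overall strategy follows the pattern by which Feng--Jech \cite{FJ} (and Todorcevic) extract saturation- and precipitousness-type conclusions from strong reflection, and parallels the Foreman--Magidor--Shelah proof \cite{FMS} that Martin's Maximum implies \textbf{NSSAT}.
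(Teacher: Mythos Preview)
The paper does not actually prove this theorem; it is simply quoted with a citation to \cite{T}, so there is no in-paper argument to compare against. Your outline is the standard Todorcevic/Feng--Jech proof and is essentially correct through the point where you obtain the continuous $\in$-chain $\langle N_\xi:\xi<\omega_1\rangle\subseteq\mathfrak Z$ with $\sup\bigl(\bigcup_\xi N_\xi\cap\omega_2\bigr)=\delta<\omega_2$ and correctly flag that $D\subseteq\bigcup_{\alpha<\delta}S_\alpha$ alone is not yet a contradiction.

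Your final paragraph is vaguer than it needs to be, and the talk of transitive collapses is a red herring. No refinement of $\mathfrak Z$ is required. The missing step is just this: on a club one has $N_\xi\cap\omega_1=\xi$; for limit $\xi$ the witness $\alpha(N_\xi)\in N_\xi\cap\omega_2=\bigcup_{\zeta<\xi}(N_\zeta\cap\omega_2)$, so $h(\xi):=\min\{\zeta:\alpha(N_\xi)\in N_\zeta\}$ is regressive. Fodor gives a stationary set on which $h$ is constantly $\zeta_0$, hence $\alpha(N_\xi)$ ranges over the \emph{countable} set $N_{\zeta_0}\cap\omega_2$, and a further thinning fixes a single $\alpha^*<\delta$ with $S_{\alpha^*}$ meeting the given stationary $T$ stationarily. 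Thus $\{S_\alpha:\alpha<\delta\}$ is already maximal, contradicting the existence of $S_\delta$. With this in place your sketch is complete.
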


\begin{thm}[ \cite{FMS}]
	\textbf{\textup{SRP}} implies \textbf{\textup{SCC}}.
\end{thm}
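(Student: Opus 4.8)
The plan is to deduce \textbf{SCC} from \textbf{SRP} by first reducing it to a ``keep--growing'' lemma and then using the reflecting power of \textbf{SRP} to rule out the only way that lemma could fail. Fix a regular $\lambda>2^{\aleph_2}$, an expansion $M^*$ of $\langle H_\lambda,\in\rangle$ --- which we may harmlessly assume carries definable Skolem functions and a definable well--ordering of $H_\lambda$ --- and a countable $N\prec M^*$; set $\delta:=N\cap\omega_1$. We must produce $N'$ with $N\prec N'\prec M^*$, $N'\cap\omega_1=\delta$, and $|N'\cap\omega_2|=\aleph_1$. I would reduce this to the \emph{extension step} $(\ast)$: every countable $P\prec M^*$ with $N\subseteq P$ and $P\cap\omega_1=\delta$ has a countable $P'$ with $P\subsetneq P'\prec M^*$, $P'\cap\omega_1=\delta$, and $P'\cap\omega_2\supsetneq P\cap\omega_2$. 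Granting $(\ast)$, one starts from $P_0:=N$ and builds a continuous $\subseteq$--increasing $\omega_1$--chain $\langle P_\eta:\eta<\omega_1\rangle$ of countable elementary submodels, applying $(\ast)$ at successors and taking unions at limits; since the clause $P_\eta\cap\omega_1=\delta$ is preserved under unions (the value $\delta$ is fixed) while the traces $P_\eta\cap\omega_2$ strictly increase, the model $N':=\bigcup_{\eta<\omega_1}P_\eta$ is elementary in $M^*$, contains $N$, satisfies $N'\cap\omega_1=\delta$, and has $|N'\cap\omega_2|=\aleph_1$. This reduction is routine.

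Next I would examine how $(\ast)$ could fail. If it fails, fix a countable non--extendable witness $P_0\prec M^*$ with $N\subseteq P_0$ and $P_0\cap\omega_1=\delta$. Then for every $\gamma\in\omega_2\setminus P_0$ the model $\mathrm{Sk}^{M^*}(P_0\cup\{\gamma\})$ --- whose $\omega_2$--trace properly extends that of $P_0$ --- must have $\omega_1$--trace strictly above $\delta$, so some Skolem term applied to $\gamma$ and parameters from $P_0$ outputs an ordinal in $(\delta,\omega_1)$. As there are only countably many Skolem terms and $P_0$ is countable, a pigeonhole over the $\aleph_2$ choices of $\gamma$ produces a single term $t$ and tuple $\bar p\in P_0^{<\omega}$ with $t^{M^*}(\gamma,\bar p)\in(\delta,\omega_1)$ for $\aleph_2$--many $\gamma$; and since $(\delta,\omega_1)$ has size at most $\aleph_1<\aleph_2$ --- here we freely use $2^{\aleph_0}=2^{\aleph_1}=\aleph_2$, already established from \textbf{SRP} --- a second pigeonhole yields an ordinal $\beta^*\in(\delta,\omega_1)$ and a set $E^*\subseteq\omega_2$ of size $\aleph_2$, hence unbounded in $\omega_2$, on which $t^{M^*}(\cdot,\bar p)$ is constantly $\beta^*$.

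Finally I would use \textbf{SRP} to derive a contradiction. Passing to a large regular $\theta$ and a structure $M^\dagger$ on $H_\theta$ coding $M^*,N,P_0,t,\bar p,\beta^*$ and $E^*$, one builds from this data a projective stationary family $\mathfrak Z\subseteq[H_\theta]^{\aleph_0}$ --- its defining clauses (membership of the parameters, $\sigma\cap\omega_1\in\omega_1$, and $\sigma\cap E^*$ cofinal in $\sup(\sigma\cap\omega_2)$) control, via the single parameter $\beta^*$, the new countable ordinals that can enter a model as one climbs along $E^*$ --- and applies \textbf{SRP} (after the standard coding of $H_\theta$ by an ordinal) to $\mathfrak Z$ together with a suitable $\aleph_1$--sized set containing $P_0$ and meeting $E^*$ cofinally. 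The resulting $\aleph_1$--sized reflecting set $W$, filtered through a chain in the club $\mathfrak Z\cap[W]^{\aleph_0}$, is then shown to furnish an elementary $P_0\subseteq Q\prec M^*$ with $Q\cap\omega_1=\delta$ and $Q\cap\omega_2\supsetneq P_0\cap\omega_2$, contradicting the non--extendability of $P_0$ and hence establishing $(\ast)$ and \textbf{SCC}.

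The hard part is this last step: choosing the projective stationary $\mathfrak Z$ so that \textbf{SRP} genuinely applies to it (stationarity over \emph{every} stationary $T\subseteq\omega_1$, not merely a single one), and so that its $\aleph_1$--sized reflection is forced to be ``thin in $\omega_1$ but thick in $\omega_2$'' --- that is, forced to carry an extension of $P_0$ of the required shape rather than simply swallowing all of $\omega_1$. Getting this balance right, and extracting the contradiction from the reflected set, is exactly the combinatorial core of the Foreman--Magidor--Shelah treatment of \textbf{SRP}; the reduction of the first paragraph, the pigeonhole of the second, and the cardinal arithmetic borrowed en route are all routine. The complete argument is in \cite{FMS}.
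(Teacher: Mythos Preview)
The paper does not prove this theorem at all; the statement is simply recorded with a citation to \cite{FMS}, and no argument is given. In that sense your closing sentence --- deferring the combinatorial core to \cite{FMS} --- already matches the paper exactly.

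Your sketch goes considerably further than the paper does. The reduction to the one-step extension property $(\ast)$ and the $\omega_1$-chain construction from it are standard and correct, and the pigeonhole paragraph is fine (the appeal to $2^{\aleph_0}=\aleph_2$ is superfluous, since $|(\delta,\omega_1)|\le\aleph_1$ is automatic). The genuinely delicate point, as you yourself flag, is the \textbf{SRP} step, and here your outline is more heuristic than accurate: with the family $\mathfrak Z$ you describe, \textbf{SRP} hands you an $\aleph_1$-sized $W$ and a club in $[W]^{\aleph_0}$, but nothing forces any member of that club to have $\omega_1$-trace equal to the fixed ordinal $\delta$ --- typically $W\supseteq\omega_1$ and the $\omega_1$-traces along a continuous chain sweep out a club of $\omega_1$, missing $\delta$ in general. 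The auxiliary data $\beta^*,E^*$ do not obviously help, since the single term $t$ controls only one value in $(\delta,\omega_1)$, not all Skolem outputs below $\omega_1$. The published arguments organize the \textbf{SRP} application rather differently, so your description of step~3 should be read as motivation rather than as a faithful pr\'ecis of \cite{FMS}; but since the paper itself offers nothing beyond the citation, this is not a discrepancy with the paper.
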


\begin{thm}[ {\cite{DT3}}]
	\textbf{\textup{SRP}} implies \textbf{\textup{Axiom R}}.
\end{thm}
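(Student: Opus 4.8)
The plan is to derive \textbf{Axiom R} from \textbf{SRP} by, given a stationary $\Sigma \subseteq [X]^{<\omega_1}$ and a tight unbounded $\Gamma \subseteq [X]^{<\omega_2}$, manufacturing an appropriate set $\mathfrak{Z} \subseteq \mathcal{P}_{\omega_1}(\lambda)$ for a suitable $\lambda \geq \aleph_2$ coding both $X$ and $\Gamma$, applying \textbf{SRP} to produce a reflecting $Y$ of size $\aleph_1$, and then using the tightness and unboundedness of $\Gamma$ to promote $Y$ to an actual member $Y^* \in \Gamma$ with $\mathcal{P}(Y^*) \cap \Sigma$ stationary in $[Y^*]^{<\omega_1}$. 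First I would fix $\lambda$ large enough that $[X]^{<\omega_2}$, $\Gamma$, $\Sigma$ are all in $H(\lambda)$, and identify $[X]^{<\omega_1}$ with a subset of $\mathcal{P}_{\omega_1}(\lambda)$ via a fixed bijection/coding, so that the stationarity hypothesis on $\Sigma$ translates into: $\mathfrak{Z} := \{\sigma \in \mathcal{P}_{\omega_1}(\lambda) : \sigma \text{ codes a member of } \Sigma\}$ (intersected with the club of $\sigma$ closed under the relevant Skolem functions) has the property that for every stationary $T \subseteq \omega_1$, $\{\sigma \in \mathfrak{Z} : \sigma \cap \omega_1 \in T\}$ is stationary in $\mathcal{P}_{\omega_1}(\lambda)$ — here I would use that $\Sigma$ stationary in $[X]^{<\omega_1}$ already gives stationarity against all clubs, and splitting along $\sigma \cap \omega_1$ costs nothing since $\Sigma$-sets of every countable ``height'' appear. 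With that, \textbf{SRP} applied to $\mathfrak{Z}$ and to $X$ (viewed inside $\lambda$) yields $Y \subseteq \lambda$ with $X \subseteq Y$, $|Y| = \aleph_1$, and $\mathfrak{Z} \cap \mathcal{P}_{\omega_1}(Y)$ containing a club of $\mathcal{P}_{\omega_1}(Y)$.

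Next I would extract from this the topological conclusion. The club in $\mathcal{P}_{\omega_1}(Y)$ lying inside $\mathfrak{Z}$ decodes to the statement that $\{\sigma \cap X : \sigma \in \mathcal{P}_{\omega_1}(Y) \text{ in the club}\}$ is a club of $[Y \cap X]^{<\omega_1}$ all of whose members lie in $\Sigma$ — or rather, that $\mathcal{P}(Y \cap X) \cap \Sigma$ is \emph{club}, hence certainly stationary, in $[Y \cap X]^{<\omega_1}$. Write $X' := Y \cap X$; we now have a set $X'$ of size $\leq \aleph_1$ with $\mathcal{P}(X') \cap \Sigma$ stationary in $[X']^{<\omega_1}$. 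The remaining task is to replace $X'$ by a genuine element of $\Gamma$. Here I would build an increasing continuous chain $\langle C_\xi : \xi < \omega_1 \rangle$ of members of $\Gamma$: using unboundedness of $\Gamma$ to keep climbing above $X' \cap (\text{what we have so far})$ and to absorb a cofinal enumeration of $X'$, and using tightness of $\Gamma$ — with $\text{cf}(\delta) = \omega_1$, which is strictly between $\omega$ and $\omega_2$ — to close off at limits of uncountable cofinality; a standard bookkeeping then produces $Y^* := \bigcup_{\xi < \omega_1} C_\xi \in \Gamma$ (the union being in $\Gamma$ at the final stage by tightness applied to $\delta = \omega_1$, or by choosing $Y^*$ to be $C_\xi$ for an appropriate $\xi$ if one first arranges $X' \subseteq C_0$) with $X' \subseteq Y^*$ and $|Y^*| \leq \aleph_1$.

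Finally, since $X' \subseteq Y^*$ and $\mathcal{P}(X') \cap \Sigma$ is stationary in $[X']^{<\omega_1}$, and since $[X']^{<\omega_1}$ sits inside $[Y^*]^{<\omega_1}$ as a ``stationary-set-preserving'' subspace (stationarity in $[X']^{<\omega_1}$ of a set of subsets of $X'$ implies stationarity of the same set viewed in $[Y^*]^{<\omega_1}$, because any club of $[Y^*]^{<\omega_1}$ restricts to a club of $[X']^{<\omega_1}$ once we intersect with $X'$), we conclude $\mathcal{P}(Y^*) \cap \Sigma \supseteq \mathcal{P}(X') \cap \Sigma$ is stationary in $[Y^*]^{<\omega_1}$. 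This $Y^*$ witnesses \textbf{Axiom R}. The main obstacle I anticipate is the bookkeeping in the middle step: arranging the coding of $\Sigma$ into $\mathfrak{Z}$ so that the $\sigma \cap \omega_1 \in T$ clause of \textbf{SRP} is genuinely available for \emph{every} stationary $T$ (one must check $\Sigma$ is not concentrated on a single cofinality-type of countable ordinals, or argue around it), and dually, making the chain construction for $Y^*$ respect both the unboundedness and the tightness of $\Gamma$ simultaneously while keeping the already-obtained set $X'$ inside — the cofinality hypothesis $\omega < \text{cf}(\delta) < \omega_2$ in the definition of tight is exactly what licenses closing up at stage $\omega_1$, and one must be careful not to need closure at countable-cofinality limits, which tightness does not provide.
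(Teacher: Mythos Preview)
The paper does not prove this theorem; it simply records it with a citation to \cite{DT3}. There is therefore no proof in the paper against which to compare your proposal.

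That said, your argument contains a genuine error in its final step. You assert that because $X' \subseteq Y^*$ and $\mathcal{P}(X')\cap\Sigma$ is stationary (indeed club) in $[X']^{<\omega_1}$, it follows that $\mathcal{P}(Y^*)\cap\Sigma$ is stationary in $[Y^*]^{<\omega_1}$. This is false. If $X' \subsetneq Y^*$, fix any $y \in Y^* \setminus X'$; then $\{a \in [Y^*]^{<\omega_1} : y \in a\}$ is club in $[Y^*]^{<\omega_1}$ and is disjoint from $[X']^{<\omega_1}$, so all of $[X']^{<\omega_1}$ --- in particular $\mathcal{P}(X')\cap\Sigma$ --- is nonstationary in $[Y^*]^{<\omega_1}$. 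The restriction map $a \mapsto a \cap X'$ does carry clubs of $[Y^*]^{<\omega_1}$ to clubs of $[X']^{<\omega_1}$, but that only transfers stationarity \emph{downward}: from ``some $c \in C$ has $c \cap X' \in \Sigma$'' you cannot conclude that $c$ itself lies in $\Sigma$, nor that $c \cap X'$ lies in $C$.

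This breaks the overall strategy of first reflecting $\Sigma$ via \textbf{SRP} to obtain $X'$ and only afterwards enlarging into $\Gamma$: the enlargement destroys exactly the stationarity you worked to obtain. A correct argument must interleave the two tasks, building a single $\omega_1$-chain of countable elementary submodels whose union, traced on $X$, is simultaneously a set to which $\Sigma$ reflects and --- via elementarity and the unboundedness of $\Gamma$ --- a member of $\Gamma$, with tightness at cofinality $\omega_1$ supplying the final closure. You also correctly flag, but do not resolve, the mismatch between ``$\Sigma$ stationary'' and the projective-stationarity hypothesis in this paper's formulation of \textbf{SRP}; that is a second gap that must be handled.
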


It is conjectured that MM$(S)[S]$ implies \textbf{SRP}; at present, we just know:

\begin{thm}[ {\cite{M}}]
	\textup{MM}$(S)$ implies \textbf{\textup{SRP}}.
\end{thm}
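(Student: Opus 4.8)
The plan is to reduce to the classical proof that MM implies \textbf{SRP} (Todorcevic; Feng--Jech), checking in addition that the natural club-shooting poset used there is $S$-preserving, so that MM$(S)$ applies to it. Fix $\lambda\ge\aleph_2$, $\mathfrak{Z}\subseteq\mathcal{P}_{\omega_1}(\lambda)$ satisfying the hypothesis of \textbf{SRP}, and $X\subseteq\lambda$ with $|X|=\aleph_1$; note that instantiating that hypothesis with $T=\omega_1$ shows $\mathfrak{Z}$ is stationary in $\mathcal{P}_{\omega_1}(\lambda)$. Fix also a regular $\theta$ large enough that $\lambda,\mathfrak{Z},X,S\in H(\theta)$, and let $P$ be the poset whose conditions are countable, $\in$-increasing, continuous chains $p=\langle M^p_\xi:\xi\le\gamma_p\rangle$ of countable elementary submodels of $H(\theta)$ with $\{\lambda,\mathfrak{Z},X,S\}\subseteq M^p_0$ and $M^p_\xi\cap\lambda\in\mathfrak{Z}$ for every $\xi\le\gamma_p$, ordered by end-extension. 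If $G\subseteq P$ is a filter meeting enough dense sets, then $Y:=\bigcup\{M^p_\xi\cap\lambda:p\in G,\ \xi\le\gamma_p\}$ is the desired witness: the sets $M^p_\xi\cap\lambda$ ($p\in G$) form a continuous $\subseteq$-increasing cofinal chain of members of $\mathfrak{Z}$ in $\mathcal{P}_{\omega_1}(Y)$, hence generate a closed unbounded subset of $\mathcal{P}_{\omega_1}(Y)$ contained in $\mathfrak{Z}$.

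First I would check that $P$ is stationary-set-preserving. Given a stationary $T\subseteq\omega_1$, a $P$-name $\dot{C}$ for a closed unbounded subset of $\omega_1$, and $p\in P$, projective stationarity of $\mathfrak{Z}$ (the hypothesis of \textbf{SRP}) yields a countable $N\prec H(\theta^+)$ with $p,P,\dot{C},T,S,H(\theta)\in N$, $\delta:=N\cap\omega_1\in T$, and $N\cap\lambda\in\mathfrak{Z}$. Working inside $N$ one builds a decreasing sequence $p=p_0\ge p_1\ge\cdots$ meeting every dense subset of $P$ that lies in $N$, deciding elements of $\dot{C}$ cofinally in $\delta$, and so that the ordinals $M\cap\omega_1$, for $M$ ranging over the models occurring in the $p_n$, converge to $\delta$. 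Then the chain $q$ obtained by appending $N\cap H(\theta)$ as a new top model to $\bigcup_n p_n$ is a condition of $P$ — its top model has $\lambda$-trace $N\cap\lambda\in\mathfrak{Z}$ and $\omega_1$-trace $\delta$ — which is $(N,P)$-generic and forces $\delta\in\dot{C}$; hence $q\Vdash\dot{C}\cap T\ne\varnothing$.

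The genuinely new point is that $P$ preserves $S$. Here $P$ is not $\sigma$-closed — a decreasing $\omega$-chain of conditions has as its union a chain of limit length, and a lower bound would require a model whose $\omega_1$-trace is that limit ordinal and whose $\lambda$-trace lies in $\mathfrak{Z}$, which need not exist — so one cannot invoke the easy fact that $\sigma$-closed forcings preserve Souslin trees; and since a product of two stationary-set-preserving posets need not be stationary-set-preserving, SSP-ness of $P$ together with ccc-ness of $S$ does not suffice either. Instead I would prove directly that $P\times S$ is stationary-set-preserving, which by Miyamoto's analysis \cite{M} gives that $P$ forces $S$ to remain Souslin. The argument is the one above, carried out on both coordinates at once: with $N$ as in the previous paragraph, one interleaves the construction of the $p_n$ with a $\le_S$-increasing sequence $s_0\le s_1\le\cdots$ of nodes of $S$ with $\mathrm{lev}(s_n)\to\delta$, chosen so that the branch $b=\{t\in S:t\le_S s_n\text{ for some }n\}$ is $(N,S)$-generic. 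Because $S$ is a coherent Souslin tree it is closed under unions of its cofinal branches of countable length, so $b$ has an upper bound $s_\delta\in S$ at level $\delta$; then $(q,s_\delta)$ — with $q$ as before — is an $(N,P\times S)$-generic condition below $(p,s_0)$ forcing $\delta$ into the relevant club. I expect this capping step — producing $s_\delta$ and verifying that it is $(N,S)$-generic — to be the main obstacle: it is essentially the only place where the special structure of $S$ is used, and it is what forces one to work with a coherent rather than an arbitrary Souslin tree.

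Finally, with $P$ shown stationary-set-preserving and $S$-preserving, I would apply MM$(S)$ to $P$ and to the family of $\aleph_1$ dense sets consisting of $D_\alpha:=\{p:M^p_{\gamma_p}\cap\omega_1>\alpha\}$ for $\alpha<\omega_1$ — dense by elementarity, and forcing the heights of the models to be cofinal in $\omega_1$ — together with $E_x:=\{p:x\in\bigcup_{\xi\le\gamma_p}M^p_\xi\}$ for $x\in X$, which is dense because $\mathfrak{Z}$ is stationary, so $M^p_{\gamma_p}$ can always be properly end-extended to an elementary submodel containing $x$ whose $\lambda$-trace lies in $\mathfrak{Z}$. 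The filter $G$ furnished by MM$(S)$ then yields, as noted above, a set $Y$ with $X\subseteq Y$, $|Y|=\aleph_1$, and $\mathfrak{Z}\cap\mathcal{P}_{\omega_1}(Y)$ containing a closed unbounded set — exactly the conclusion of \textbf{SRP}. The role of $S$-preservation is solely to license this appeal to MM$(S)$; the rest is the classical derivation of \textbf{SRP} from Martin's Maximum.
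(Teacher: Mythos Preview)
The paper does not prove this statement itself; it is quoted as a theorem of Miyamoto \cite{M}. Your plan---verify that the club-shooting poset $P$ through a projectively stationary family is both stationary-set-preserving and $S$-preserving, then invoke MM$(S)$---is the standard one.

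There is, however, a genuine gap in your capping step. Coherent Souslin trees are \emph{not} closed under unions of their cofinal countable branches. In the usual presentation the nodes at a limit level $\delta$ are exactly the finite modifications of a fixed $e_\delta:\delta\to\omega$, whereas an arbitrary branch through $S_{<\delta}$ produces an $f:\delta\to\omega$ satisfying only $f\upharpoonright\alpha=^*e_\delta\upharpoonright\alpha$ for every $\alpha<\delta$, and this does not force $f=^*e_\delta$. So building an $(N,S)$-generic branch first and hoping it extends to a node at level $\delta$ does not work, for coherent $S$ or any other Souslin tree. The repair is to reverse the order of the choices: fix at the outset some $s_\delta\in S_\delta$ above $s_0$ (such a node exists since Souslin trees are normal), and then construct the pairs $(p_n,s_n)$ so that each $s_n$ lies on the branch below $s_\delta$. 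This succeeds because, given a dense $D_n\in N$, the set of tree-extensions $s$ of $s_n$ for which some $p\le p_n$ has $(p,s)\in D_n$ is open dense below $s_n$ in $S$ and belongs to $N$; any maximal antichain inside it is countable by Souslinness, hence contained in $S_{<\delta}\subseteq N$, and therefore contains some $s_{n+1}$ on the branch below $s_\delta$; elementarity then supplies a matching $p_{n+1}\in N$. Note that this uses only that $S$ is Souslin: contrary to your expectation, coherence plays no role in this particular implication---it is essential for the iteration theorems that produce models of MM$(S)$, not for deducing \textbf{SRP} from MM$(S)$.
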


We also have:

\begin{thm}[ {\cite{DT3}}]
	\textup{MM}$(S)[S]$ implies \textbf{\textup{Axiom R}}.
\end{thm}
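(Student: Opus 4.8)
The goal is to prove Theorem (MM$(S)[S]$ implies \textbf{Axiom R}), so the plan is to pass through the Strong Reflection Principle. By the earlier results of this section, we already know \textbf{SRP} implies \textbf{Axiom R} (Theorem \cite{DT3}) and MM$(S)$ implies \textbf{SRP} (Theorem \cite{M}). The problem is that the first model of a ``$[S]$''-extension one would write down satisfies MM$(S)$ in the ground model but we want the conclusion to survive the forcing with $S$. So the real content is: \textbf{Axiom R} is preserved by forcing with a coherent Souslin tree $S$. Thus I would first argue that $S$-forcing does not destroy \textbf{Axiom R}, and then conclude: start in a model of MM$(S)$ (which we always have as the ground model of an MM$(S)[S]$ construction), deduce \textbf{SRP} and hence \textbf{Axiom R} there, and carry \textbf{Axiom R} up through the forcing with $S$.

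The key step — preservation of \textbf{Axiom R} under forcing with $S$ — I would handle by exploiting that $S$ has cardinality $\aleph_1$ and is countably closed-ish in the relevant sense; more precisely, $S$ is a Souslin tree, so forcing with $S$ is ccc, of size $\aleph_1$, and (crucially) adds no new countable subsets of the ground model and no new $\omega_1$-sequences that are ``unexpected'' at the level of stationary reflection. Concretely, given in $V[S]$ a stationary $\Sigma \subseteq [X]^{<\omega_1}$ and a tight unbounded $\Gamma \subseteq [X]^{<\omega_2}$, one names them by $S$-names $\dot\Sigma,\dot\Gamma$; using that $S$ is ccc one finds ground-model stationary/tight objects approximating them. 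One then applies \textbf{Axiom R} in $V$ to obtain a candidate $Y \in \Gamma$ with $\mathcal P(Y)\cap \Sigma$ stationary in $[Y]^{<\omega_1}$, and one verifies that stationarity of subsets of $[Y]^{<\omega_1}$ (a statement about countable sets, where forcing with $S$ adds nothing) is absolute between $V$ and $V[S]$. The tightness hypothesis is used to ensure the $Y$ produced from the ground-model approximation genuinely lies in the $V[S]$-version $\Gamma$, since tightness is exactly the closure property that lets unions of short increasing chains stay inside.

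The main obstacle I anticipate is the bookkeeping needed to show that a name for a stationary $\Sigma$ in $V[S]$ can be reflected using only ground-model data: stationarity of subsets of $[X]^{<\omega_1}$ for $X$ of size $\geq \aleph_2$ is not automatically $S$-forcing absolute (clubs in $[X]^{<\omega_1}$ over $V$ need not remain clubs, a priori, although since $S$ adds no new countable sets the underlying points of the club are unchanged — the subtlety is with new regressive functions). Here one leans on the fact that forcing with $S$, being ccc and of size $\aleph_1$, preserves stationary subsets of $[X]^{<\omega_1}$: any new club in $V[S]$ is trapped by a ground-model club because an $S$-name for a continuous increasing $\omega_1$-chain can be refined, by ccc-ness, to a ground-model continuous increasing chain whose range is contained in the name's range on a club. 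This is the place where I would be most careful, and it is essentially a reprise of the standard argument that ccc forcing preserves stationarity in $[X]^{\aleph_0}$, adapted to the tight $\Gamma$.

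Finally, I would assemble the pieces: in $V$ a model of MM$(S)$, by Theorem \cite{M} we have \textbf{SRP}, hence by Theorem \cite{DT3} we have \textbf{Axiom R} in $V$; by the preservation argument above \textbf{Axiom R} continues to hold in $V[S]$; therefore MM$(S)[S]$ implies \textbf{Axiom R}. I would remark that this is exactly the pattern by which ``Souslin-type'' consequences of MM survive the $[S]$ step, so the proof is really just checking that \textbf{Axiom R}, being a statement whose combinatorial core lives on countable sets, is of Souslin type in the relevant sense.
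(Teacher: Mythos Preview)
The paper itself gives no proof here; it simply cites \cite{DT3}. Your overall strategy --- deduce \textbf{Axiom R} in the MM$(S)$ ground model via \textbf{SRP} (using the two theorems immediately preceding this one) and then argue that \textbf{Axiom R} survives forcing with $S$ --- is the natural one suggested by the paper's arrangement of results, and is presumably the shape of the argument in \cite{DT3}.

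That said, the preservation step is where all the content lies, and your sketch does not close it. You correctly observe that $S$ is ccc and $\omega$-distributive, so $[X]^{\aleph_0}$ is unchanged and ground-model clubs remain clubs. But the direction needed is the opposite: the $\Sigma$ and $\Gamma$ to which \textbf{Axiom R} must apply live in $V[G_S]$, not in $V$. For $\Gamma$ one can indeed dovetail, using ccc and tightness, to produce ground-model sets forced into $\dot\Gamma$ --- your remark about tightness points the right way. For $\Sigma$, however, the obvious pullback $\Sigma^* = \{a : (\exists p\in S)\,p\Vdash a\in\dot\Sigma\}$ is only a \emph{superset} of $\Sigma$ in the extension; applying \textbf{Axiom R} in $V$ to $\Sigma^*$ gives a $Y$ with $\Sigma^*\cap[Y]^{\aleph_0}$ stationary, which says nothing about $\Sigma\cap[Y]^{\aleph_0}$. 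Your appeal to absoluteness of stationarity in $[Y]^{\aleph_0}$ between $V$ and $V[G_S]$ is fine for ground-model sets, but $\Sigma$ is not one. (Also, $S$ certainly does add a new $\omega_1$-sequence --- the generic branch --- so that heuristic should be dropped.) Bridging this gap requires either using more than the bare statement of \textbf{Axiom R} in $V$ --- for instance the full strength of \textbf{SRP}, whose projective-stationary technology interacts better with names --- or a direct analysis of $S$-names for stationary subsets of $[X]^{\aleph_0}$ that exploits the tree structure of $S$. That missing idea is exactly what \cite{DT3} supplies.
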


\noindent
\begin{minipage}[t]{1cm}
\textbf{LCN}\\
\end{minipage}
\begin{minipage}[t]{12.5cm}
	Every locally compact normal space is collectionwise Hausdorff.
\end{minipage}
\begin{cor}[ \cite{DT3}]
	There is a model of \textbf{\textup{MM}$(S)[S]$} in which \textbf{\textup{LCN}} holds.
\end{cor}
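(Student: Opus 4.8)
The plan is to combine the machinery developed earlier in Section 4 with the standard "front-loaded forcing" technique for obtaining full collectionwise Hausdorffness. Recall that we already have Theorem~\ref{} producing a model of MM$(S)[S]$ in which \textbf{PID} and \textbf{PID}$^+$ hold, and Theorem~\ref{} ($=$ \cite{DT3}) telling us that MM$(S)[S]$ implies \textbf{Axiom R}. The target \textbf{LCN} asserts that locally compact normal spaces are collectionwise Hausdorff, so the first step is to reduce \textbf{LCN} to a combination of (i) $\aleph_1$-collectionwise Hausdorffness for locally compact normal spaces, and (ii) a reflection step that promotes $\aleph_1$-collectionwise Hausdorffness to full collectionwise Hausdorffness.

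For ingredient (i): I would first show that in any model of MM$(S)[S]$ (or already PFA$(S)[S]$), locally compact normal spaces are $\aleph_1$-collectionwise Hausdorff. This is the ``typically we may only deduce $\aleph_1$-collectionwise Hausdorffness'' remark from Section 3, and it follows from the $S$-preserving posets that shoot countable-chain-condition-like separating open sets around a discrete family of size $\aleph_1$; one checks these posets preserve the coherent Souslin tree, invokes MM$(S)$, and then argues the separation survives forcing with $S$. For ingredient (ii): here is where \textbf{Axiom R} (equivalently the reflection built into the front-loaded model) does the work. Given a locally compact normal $X$ with a closed discrete subspace $D$ of size $\kappa > \aleph_1$, one reflects: the set of $Y \in [D]^{<\omega_2}$ for which the subspace structure is ``nicely separated'' is tight and unbounded, and the stationary set of countable approximations that are separated (by $\aleph_1$-collectionwise Hausdorffness) meets it, yielding a $Y$ of size $\aleph_1$ with $\mathcal{P}(Y) \cap \Sigma$ stationary. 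A standard induction on $\kappa$, using normality of $X$ to amalgamate the separations at each stage, then separates all of $D$. The alternative route -- which is what \cite{DT3} actually carries out -- is to do a preliminary Easton-type forcing adjoining Cohen subsets before forcing MM$(S)[S]$, so that by \cite{Tpp} normal spaces of character $\le \aleph_1$ that are $\aleph_1$-collectionwise Hausdorff become collectionwise Hausdorff after the subsequent $\aleph_2$-chain-condition forcing; since locally compact normal spaces of the relevant kind have character $\le \aleph_1$ in these arguments (or can be cut down to such), this finishes the proof.

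So the proof structure is: (1) start with a supercompact $\kappa$, make it indestructible under countably closed forcing \cite{L}; (2) perform the Easton extension adjoining $\kappa^+$ Cohen subsets of each regular uncountable $\kappa$, producing $\diamondsuit$ and a coherent Souslin tree $S$; (3) do the semi-proper, $S$-preserving iteration of length the (former) supercompact that forces MM$(S)$, using that ``nice'' semi-proper iterations of $S$-preserving posets are semi-proper and $S$-preserving \cite{M}; (4) force with $S$; (5) in the resulting model verify \textbf{LCN} by combining $\aleph_1$-collectionwise Hausdorffness (from MM$(S)[S]$, via the $S$-preserving separating posets) with the collectionwise-Hausdorff-promotion from \cite{Tpp} guaranteed by the Easton preliminary forcing. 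One also notes along the way that \textbf{Axiom R} holds here (Theorem~\ref{}), so the reflection-principle route gives an alternative self-contained argument.

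The main obstacle I anticipate is step (5), specifically verifying that the $\aleph_1$-collectionwise Hausdorffness is genuinely available and then that the promotion lemma of \cite{Tpp} applies in the $S[S]$ context. Two subtleties: first, one must confirm that the posets used to separate an $\aleph_1$-sized discrete family in a locally compact normal space really are $S$-preserving -- this is not automatic and is exactly the kind of check that distinguishes the $(S)$-restricted axioms from the full ones; second, the promotion result in \cite{Tpp} is stated for forcing of size $\le \aleph_2$ over the Easton model, and one must ensure the composite forcing (the MM$(S)$ iteration followed by forcing with $S$) can be arranged to have, or be factored through, something satisfying the $\aleph_2$-chain condition with the right cardinality bound -- or else appeal directly to \textbf{Axiom R} instead. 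Since \cite{DT3} has carried this out, the honest thing is to cite it; a full self-contained argument here would essentially reproduce that paper.

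\begin{proof}
This is \cite{DT3}; we sketch the argument. Starting from a supercompact cardinal made indestructible under countably closed forcing \cite{L}, perform the Easton extension adjoining $\kappa^+$ Cohen subsets of each regular uncountable $\kappa$, obtaining $\diamondsuit$ and hence a coherent Souslin tree $S$. Then carry out a ``nice'' semi-proper, $S$-preserving iteration forcing MM$(S)$ \cite{M}, and finally force with $S$. In the resulting model, MM$(S)[S]$ holds, and by Theorem~\ref{} so do \textbf{Axiom R} and \textbf{PID}.

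To verify \textbf{LCN}, let $X$ be locally compact and normal and let $D \subseteq X$ be closed discrete. First, $S$-preserving posets that insert separating open sets around a subfamily of $D$ of size $\aleph_1$ are available under MM$(S)$, and the separation survives forcing with $S$; hence $X$ is $\aleph_1$-collectionwise Hausdorff. To pass from $\aleph_1$-collectionwise Hausdorffness to full collectionwise Hausdorffness, one uses that in the model just described, normal spaces of character $\leq \aleph_1$ that are $\aleph_1$-collectionwise Hausdorff are collectionwise Hausdorff, by the preservation result of \cite{Tpp} applicable because of the Easton preliminary forcing; alternatively, \textbf{Axiom R} reflects the separation of $D$ through an increasing chain of $\aleph_1$-sized subsets, amalgamating separations at each stage using normality of $X$. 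Either way, $D$ is separated by disjoint open sets, so $X$ is collectionwise Hausdorff.
\end{proof}
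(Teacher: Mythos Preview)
Your proposal has a genuine gap, and it is essentially the gap the paper itself warns about. Immediately after stating the corollary, the paper notes that \textbf{LCN} was claimed in \cite{T1} for the front-loaded model of \cite{LT1} by an argument of the shape you outline, but that this argument was flawed; the actual proof in \cite{DT3} deploys \textbf{NSSAT} and \textbf{SCC} as the decisive additional tools. Neither of these appears anywhere in your sketch.

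Concretely, both promotion routes you offer break down. The \cite{Tpp} route promotes $\aleph_1$-collectionwise Hausdorffness to full collectionwise Hausdorffness only for normal spaces of character $\le \aleph_1$, and only after further forcing that is $\aleph_2$-c.c.\ of size $\le \aleph_2$; a locally compact normal space need not have character $\le \aleph_1$, and the MM$(S)$ iteration is of supercompact length, so neither hypothesis is met (you flag both worries yourself but do not resolve them --- your parenthetical ``or can be cut down to such'' is exactly where the flawed argument lived). The \textbf{Axiom R} alternative, as the paper notes in Section~3, yields collectionwise Hausdorffness only for restricted classes such as locally separable normal first countable spaces, not for arbitrary locally compact normal spaces; your ``standard induction on $\kappa$'' amalgamating separations is not standard and does not go through without further input. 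What \cite{DT3} actually does is use the saturation of $\mathrm{NS}_{\omega_1}$ together with the Strong Chang Conjecture --- both available under MM$(S)[S]$ by Theorems~4.12 and~4.13 --- to push the argument through for locally compact normal spaces without a character restriction.
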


\textbf{LCN} was claimed in the model of \cite{LT1} in \cite{T1}, but the argument was flawed. In \cite{DT3}, the additional tools of \textbf{NSSAT} and \textbf{SCC} are deployed in order to prove \textbf{LCN}.

We also have

\begin{thm}
	\textup{MM}$(S)[S]$ implies \textbf{\textup{NSSAT}}.
\end{thm}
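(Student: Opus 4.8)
The plan is to pass through the Strong Reflection Principle in the ground model and then argue that forcing with the coherent Souslin tree $S$ cannot destroy the saturation of $\mathrm{NS}_{\omega_1}$. So let $V\models$ MM$(S)$ with distinguished coherent Souslin tree $S$; I want to show $V[S]\models$ \textbf{NSSAT}. By the theorem above that MM$(S)$ implies \textbf{SRP}, together with the theorem that \textbf{SRP} implies \textbf{NSSAT}, we get $V\models$ \textbf{NSSAT}; that is, $\mathrm{NS}_{\omega_1}^{V}$ is $\aleph_2$-saturated. By the theorem above that \textbf{SRP} implies $2^{\aleph_0}=2^{\aleph_1}=\aleph_2$, the ground model also satisfies that cardinal arithmetic.

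Next I would collect the relevant facts about $S$-forcing, all consequences of $S$ being ccc of size $\aleph_1$: all cardinals and cofinalities are preserved; a ground-model subset of $\omega_1$ is stationary in $V[S]$ iff it is stationary in $V$; every club of $\omega_1$ in $V[S]$ contains a ground-model club, since the continuous enumeration of the club is, by ccc-ness, pointwise dominated by a ground-model function, from which one reads off a club in $V$. Consequently every nonstationary subset of $\omega_1$ in $V[S]$ is disjoint from, hence contained in the complement of, a ground-model club, so $\mathrm{NS}_{\omega_1}^{V[S]}$ is exactly the ideal generated in $V[S]$ by $\mathrm{NS}_{\omega_1}^{V}$. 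Finally, from $|S|=\aleph_1$ and $2^{\aleph_1}=\aleph_2$ in $V$ one gets $2^{\aleph_1}=\aleph_2$ in $V[S]$, so the cardinal-arithmetic obstruction — the one that lets generic extensions by large ccc posets destroy saturation — is absent here. Thus the theorem reduces to the statement that passing to the ideal generated by an $\aleph_2$-saturated normal ideal, through the particular ccc forcing $S$ of size $\aleph_1$, keeps the ideal $\aleph_2$-saturated; equivalently, that $(\mathcal{P}(\omega_1)/\mathrm{NS})^{V[S]}$ is $\aleph_2$-cc.

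To prove that, I would suppose not and fix in $V[S]$ a maximal almost disjoint family $\{A_\alpha:\alpha<\omega_2\}$ of stationary subsets of $\omega_1$. For each $\alpha$ pick a nice name $\dot A_\alpha$, coded by the sequence $\langle a^\alpha_\xi:\xi<\omega_1\rangle$ where $a^\alpha_\xi$ is the countable antichain of conditions forcing $\xi\in\dot A_\alpha$; set $B_\alpha=\{\xi<\omega_1:a^\alpha_\xi\neq\emptyset\}\in V$, a ground-model stationary set with $A_\alpha\subseteq B_\alpha$; and for $\alpha\neq\beta$ fix a ground-model club $C_{\alpha\beta}$ with $A_\alpha\cap A_\beta\cap C_{\alpha\beta}=\emptyset$. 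The aim is to extract from this purely ground-model data an $\aleph_2$-sized almost disjoint family of stationary sets lying already in $V$, contradicting \textbf{NSSAT} there. The main obstacle will be that an ordinal $\xi$ may belong to $B_\alpha\cap B_\beta$ only through mutually incompatible conditions, so $\{B_\alpha\}$ need not be almost disjoint in $V$; one must refine the $B_\alpha$ by a pressing-down and $\Delta$-system argument that genuinely exploits both the ccc-ness of $S$ and the size bound $|S|=\aleph_1$ (and, to keep the $\omega_2$ indexing coherent, $2^{\aleph_1}=\aleph_2$). In effect this is the preservation lemma that a ccc forcing of size $\aleph_1$ preserves the saturation of $\mathrm{NS}_{\omega_1}$; it is carried out in the $S$-forcing setting in \cite{DT3}, and once it is in hand the theorem follows.
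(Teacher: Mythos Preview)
Your approach is correct but works much harder than the paper does. The paper's proof is a single line: by \cite{FMS}, \textbf{NSSAT} is preserved by \emph{any} ccc forcing, so since MM$(S)$ gives \textbf{SRP} gives \textbf{NSSAT} in $V$, and $S$ is ccc, we are done.

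You recover the same starting point (\textbf{NSSAT} in $V$ via \textbf{SRP}) but then, instead of invoking the general FMS preservation theorem, you set out to prove directly the special case that a ccc poset of size $\aleph_1$ preserves saturation of $\mathrm{NS}_{\omega_1}$, bringing in the auxiliary hypothesis $2^{\aleph_1}=\aleph_2$ and sketching a nice-name/pressing-down/$\Delta$-system argument which you ultimately defer to \cite{DT3}. None of this extra structure is needed: the FMS result requires neither the size bound on the forcing nor the cardinal arithmetic, so your detour reproves a fragment of a theorem that could simply be quoted. Your argument is not wrong, and the facts you collect about clubs and stationary sets under ccc forcing are accurate, but the paper's route is both shorter and strictly more general.
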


\begin{proof}
	According to \cite{FMS}, \textbf{NSSAT} is preserved by ccc forcing.
\end{proof}

\begin{thm}
	\textup{MM}$(S)[S]$ implies \textbf{\textup{SCC}}.
\end{thm}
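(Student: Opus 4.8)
The plan is to imitate the proof of the \textbf{NSSAT} theorem just above. First, \textbf{SCC} already holds in the ground model of MM$(S)$ that we are forcing over: by \cite{M}, MM$(S)$ implies \textbf{SRP}, and by \cite{FMS}, \textbf{SRP} implies \textbf{SCC}. Second, a coherent Souslin tree is a ccc notion of forcing, so it suffices to know that \textbf{SCC} is preserved by ccc forcing; this is where all the content is, and I would cite it from \cite{FMS}, exactly as the corresponding preservation of \textbf{NSSAT} was cited. Granting these two facts: if $G$ is $S$-generic over a model $V$ of MM$(S)$, then $V\models\textbf{SCC}$, hence $V[G]\models\textbf{SCC}$, which is the assertion.

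For orientation, the ccc-preservation runs roughly thus. The side conditions are harmless: $|S|=\aleph_1$ gives $(2^{\aleph_2})^{V[G]}=(2^{\aleph_2})^V$, and $\omega_1,\omega_2$ are preserved, so ``$\lambda$ regular and $\lambda>2^{\aleph_2}$'' is the same statement in $V$ and $V[G]$. Given in $V[G]$ an expansion $M^*$ of $\langle H_\lambda,\in\rangle$ and a countable $N\prec M^*$ with $\delta=N\cap\omega_1$, one uses ccc-ness to fix a ground-model $p_0\in G$ and a countable ground-model set $\mathcal A$ of $S$-names with $p_0\Vdash\dot N\subseteq\{\dot a^{\dot G}:\dot a\in\check{\mathcal A}\}$ (where $\dot N$ names $N$); one applies \textbf{SCC} in $V$ to a structure on some $H_\theta^V$ carrying $S$, a name $\dot M^*$ for $M^*$, $\mathcal A$, and $p_0$, obtaining an elementary submodel $N_0'\in V$ of size $\aleph_1$ with $|N_0'\cap\omega_2|=\aleph_1$; and one lets $N'$ be the $G$-evaluation of $N_0'$, using that every condition of a ccc poset is generic for every elementary submodel of such a structure, so as to verify $N\prec N'\prec M^*$, $N'\cap\omega_1=\delta$, and $|N'\cap\omega_2|=\aleph_1$.

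The main obstacle — and the reason the hypothesis must truly be ``ccc'' rather than merely ``preserves cardinals'' — is the trace bookkeeping: one must arrange that the ground-model submodel given to \textbf{SCC} has $\omega_1$-trace precisely $\delta$, even though $\delta$ is presented only as the $\omega_1$-trace of the $V[G]$-model $N$, and one must ensure that the $G$-evaluation collapses or identifies none of the $\aleph_1$ ordinals witnessing $|N_0'\cap\omega_2|=\aleph_1$. Carrying this out is precisely the non-routine part of the preservation; I would invoke it from \cite{FMS} rather than reprove it, just as was done for \textbf{NSSAT}. Since $S$ is ccc, the theorem follows.
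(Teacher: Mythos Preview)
Your overall structure matches the paper's: both argue that \textbf{SCC} holds in the MM$(S)$ ground model (via \textbf{SRP}) and is then preserved by the forcing with $S$. The divergence is in the preservation step. The paper does \emph{not} invoke a general ccc preservation theorem for \textbf{SCC}; instead it cites an unpublished result of Alan Dow that forcing with a \emph{Souslin tree} preserves \textbf{SCC}. Your proposal instead asserts that \textbf{SCC} is preserved by arbitrary ccc forcing and attributes this to \cite{FMS}.

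That attribution is the gap. The author plainly knows \cite{FMS} well enough to cite it for the ccc preservation of \textbf{NSSAT} in the immediately preceding theorem; had an analogous ccc preservation of \textbf{SCC} been available there, he would presumably have cited it rather than appeal to Dow's unpublished argument tailored to Souslin trees. So you should not treat ``\textbf{SCC} is preserved by ccc forcing'' as a black box from \cite{FMS}. Your sketched argument for preservation is in the right spirit, but it is not a citation, and the delicate point you yourself flag --- arranging that the ground-model elementary submodel handed to \textbf{SCC} has $\omega_1$-trace exactly $\delta$ and that the resulting $N'$ still satisfies $N'\cap\omega_1=\delta$ after evaluation --- is exactly where a general ccc argument is not routine. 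Dow's result sidesteps this by exploiting the specific structure of Souslin tree forcing rather than ccc alone. Either supply a complete proof of the preservation you need, or follow the paper and cite the Souslin-tree-specific result.
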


\begin{proof}
	Alan Dow (unpublished) has shown that forcing with a Souslin tree preserves \textbf{SCC}.
\end{proof}

Ideally, one would like to be able to deduce most of the topological consequences of MM$(S)[S]$ from a couple of combinatorial propositions, one following from MM$(S)$ and the other being some generalization of ``first countable normal implies collectionwise Hausdorff''. Getting the second one is plausible, since the quoted statement is known to be equivalent to a combinatorial proposition about uniformization of ladder systems -- see e.g. \cite{LT1}. Important progress along these lines was made by Todorcevic \cite{To} who proved hereditarily separable subspaces of compact sequential spaces are hereditarily Lindel\"of, assuming \textbf{PID} and $\mathfrak{b}=\aleph_2$ and by Alan Dow (unpublished), who derived $\bsigma^-$\textbf{(sequential)} from $\textbf{PID}^+$, but how much more one can achieve remains unclear.

\nocite{*}
\bibliographystyle{acm}
\bibliography{masses}

\begin{thebibliography}{10}

\bibitem{B}
{\sc Balogh, Z.~T.}
\newblock Locally nice spaces and axiom {R}.
\newblock {\em Topology Appl. 125\/} (2002), 335--341.

\bibitem{D2}
{\sc Dow, A.}
\newblock On the consistency of the {M}oore-{M}r\'owka solution.
\newblock In {\em Proceedings of the Symposium on General Topology and
  Applications (Oxford, 1989)\/} (1992), vol.~44, pp.~125--141.

\bibitem{D}
{\sc Dow, A.}
\newblock Set-theoretic update on topology.
\newblock In {\em Recent {P}rogress in {G}eneral {T}opology III ({P}rague,
  2011)\/} (North-Holland, Amsterdam, 2014), K.~P. Hart, J.~van Mill, and
  P.~Simon, Eds., pp.~329--357.

\bibitem{DT1}
{\sc Dow, A., and Tall, F.~D.}
\newblock Hereditarily normal manifolds of dimension $> 1$ may all be
  metrizable, submitted.

\bibitem{DT2}
{\sc Dow, A., and Tall, F.~D.}
\newblock {PFA$(S)[S]$} and countably compact spaces, submitted.

\bibitem{DT3}
{\sc Dow, A., and Tall, F.~D.}
\newblock Normality vs paracompactness in locally compact spaces, preprint.
\newblock \setbox0=\hbox{2017}.

\bibitem{EN}
{\sc Eisworth, T., and Nyikos, P.~J.}
\newblock Antidiamond principles and topological applications.
\newblock {\em Trans. Amer. Math. Soc. 361\/} (2009), 5695--5719.

\bibitem{FJ}
{\sc Feng, Q., and Jech, T.}
\newblock Projective stationary sets and a strong reflection principle.
\newblock {\em J. London Math. Soc. (2) 58}, 2 (1998), 271--283.

\bibitem{FTT}
{\sc Fischer, A.~J., Tall, F.~D., and Todorcevic, S.}
\newblock Forcing with a coherent {S}ouslin tree and locally countable
  subspaces of countably tight compact spaces.
\newblock {\em Topology Appl. 195\/} (2015), 284--296.

\bibitem{F}
{\sc Fleissner, W.~G.}
\newblock Left separated spaces with point-countable bases.
\newblock {\em Trans. Amer. Math. Soc. 294\/} (1986), 665--677.

\bibitem{FMS}
{\sc Foreman, M., Magidor, M., and Shelah, S.}
\newblock Martin's {M}aximum, saturated ideals and nonregular ultrafilters {I}.
\newblock {\em Annals of Math. 127}, 2 (1988), 1--47.

\bibitem{FJSSU}
{\sc Fuchino, S., Juh{\'a}sz, I., Soukup, L., Szentmikl{\'o}ssy, Z., and Usuba,
  T.}
\newblock Fodor-{t}ype reflection principle and reflection of metrizability and
  meta-{L}indel{\"o}fness.
\newblock {\em Topology Appl. 157}, 8 (2010), 1415--1429.

\bibitem{GN}
{\sc Gruenhage, G., and Nyikos, P.~J.}
\newblock Normality in {$X^2$} for compact {$X$}.
\newblock {\em Trans. Amer. Math. Soc. 340\/} (1993), 563--586.

\bibitem{KT}
{\sc Kunen, K., and Tall, F.~D.}
\newblock Between {M}artin's axiom and {S}ouslin's hypothesis.
\newblock {\em Fund. Math. 102\/} (1979), 173--181.

\bibitem{Larson}
{\sc Larson, P.}
\newblock An {$\mathbb{S}\sb {\rm max}$} variation for one {S}ouslin tree.
\newblock {\em J. Symbolic Logic 64\/} (1999), 81--98.

\bibitem{LT1}
{\sc Larson, P., and Tall, F.~D.}
\newblock Locally compact perfectly normal spaces may all be paracompact.
\newblock {\em Fund. Math. 210\/} (2010), 285--300.

\bibitem{LT2}
{\sc Larson, P., and Tall, F.~D.}
\newblock On the hereditary paracompactness of locally compact hereditarily
  normal spaces.
\newblock {\em Canad. Math. Bull. 57\/} (2014), 579--584.

\bibitem{LTo}
{\sc Larson, P., and {To}dorcevic, S.}
\newblock Kat\v etov's problem.
\newblock {\em Trans. Amer. Math. Soc. 354\/} (2002), 1783--1791.

\bibitem{L}
{\sc Laver, R.}
\newblock Making the supercompactness of {$\kappa $} indestructible under
  {$\kappa $}-directed closed forcing.
\newblock {\em Israel J. Math. 29}, 4 (1978), 385--388.

\bibitem{M1}
{\sc Miyamoto, T.}
\newblock {$\omega\sb 1$}-{S}ouslin trees under countable support iterations.
\newblock {\em Fund. Math. 142}, 3 (1993), 257--261.

\bibitem{M}
{\sc Miyamoto, T.}
\newblock On iterating semiproper preorders.
\newblock {\em J. Symb. Logic 67\/} (2002), 1431--1468.

\bibitem{N}
{\sc Nyikos, P.~J.}
\newblock Set-theoretic topology of manifolds.
\newblock In {\em General topology and its relations to modern analysis and
  algebra, V (Prague, 1981)}, vol.~3 of {\em Sigma Ser. Pure Math.} Heldermann,
  Berlin, 1983, pp.~513--526.

\bibitem{O}
{\sc Ostaszewski, A.}
\newblock On countably compact, perfectly normal spaces.
\newblock {\em J. London Math. Soc. 14}, 2 (1976), 505--516.

\bibitem{Tpp}
{\sc Tall, F.~D.}
\newblock Covering and separation properties in the {E}aston model.
\newblock {\em Topology Appl. 28\/} (1988), 155--163.

\bibitem{Tp}
{\sc Tall, F.~D.}
\newblock {PFA$(S)[S]$} and the {A}rhangel'ski{\u\i}-{T}all problem.
\newblock {\em Topology Proc.\/} (2012), 109--120.

\bibitem{T1}
{\sc Tall, F.~D.}
\newblock {PFA$(S)[S]$}: more mutually consistent topological consequences of
  {PFA} and {$V=L$}.
\newblock {\em Canad. J. Math. 64\/} (2012), 1182--1200.

\bibitem{T2}
{\sc Tall, F.~D.}
\newblock {$\mathrm{PFA}(S)[S]$} and locally compact normal spaces.
\newblock {\em Topology Appl. 162\/} (2014), 100--115.

\bibitem{To1}
{\sc Todorcevic, S.}
\newblock Chain conditions in topology, {II}, handwritten notes, 2001.

\bibitem{T}
{\sc Todorcevic, S.}
\newblock Stationary reflection principle, circulated notes.
\newblock 1987.

\bibitem{To}
{\sc Todorcevic, S.}
\newblock Forcing with a coherent {S}ouslin tree, preprint, 2010.

\bibitem{W}
{\sc Watson, W.~S.}
\newblock Sixty questions on regular not paracompact spaces.
\newblock In {\em Proceedings of the 11th {W}inter {S}chool on {A}bstract
  {A}nalysis (\v Zelezn\'a Ruda, 1983)\/} (1984), no.~Suppl. 3, pp.~369--373.

\bibitem{W1}
{\sc Watson, W.~S.}
\newblock Problems {I} wish {I} could solve.
\newblock In {\em Open {P}roblems in {T}opology}, J.~van Mill and G.~M. Reed,
  Eds. North-Holland, Amsterdam, 1990, pp.~37--76.

\bibitem{Woo}
{\sc Woodin, W.~H.}
\newblock {\em The axiom of determinacy, forcing axioms, and the nonstationary
  ideal}, vol.~1 of {\em de Gruyter Series in Logic and its Applications}.
\newblock Walter de Gruyter \& Co., Berlin, 1999.

\end{thebibliography}

{\rm Franklin D. Tall, Department of Mathematics, University of Toronto, Toronto, Ontario M5S 2E4, CANADA}

{\it e-mail address:} {\rm tall@math.utoronto.ca}

\end{document}